\numberwithin{equation}{section}
\newtheoremstyle{Teorema}
{3pt}
{3pt}
{\slshape}
{}
{\bfseries}
{:}
{\newline}
{}
\newtheorem{theorem}{Theorem}[section]
\newtheorem{definition}{Definition}
\newtheorem{corollary}[theorem]{Corollary}
\newtheorem{lemma}[theorem]{Lemma}
\theoremstyle{definition}
\DeclareMathOperator{\lcm}{lcm}
\DeclareMathOperator{\D}{d}
\DeclareMathOperator{\supp}{supp}
\DeclareMathOperator{\Co}{Co}
\DeclareMathOperator{\capac}{cap}
\newcommand{\N}{\mathbb{N}}
\newcommand{\Z}{\mathbb{Z}}
\newcommand{\R}{\mathbb{R}}
\newcommand{\C}{\mathbb{C}}
\newcommand{\Reg}{\mathbf{Reg}}
\newcommand{\diff}{\backslash}
\title{Logarithmic asymptotic of multi-level Hermite-Pad\'e polynomials}
\date{\today}
\begin{document}
\author{L.G. Gonz\'alez Ricardo}
\address{Department of Mathematics, Universidad Carlos III de Madrid, Avda. Universidad, 30
CP-28911, Leganés, Madrid, Spain.}
\email{luisggon@math.uc3m.es}

\author{G. L\'opez Lagomasino}
\address{Department of Mathematics, Universidad Carlos III de Madrid, Avda. Universidad, 30
CP-28911, Leganés, Madrid, Spain.}
\email{lago@math.uc3m.es}
\thanks{The second author was supported by research grant PGC2018-096504-B-C33 of Ministerio de Ciencia, Innovaci\'on y Universidades, Spain.}

\author{S. Medina Peralta}
\address{Department of Mathematics and Statistics, Florida International University,
DM 430 11200 SW 8th Street Miami, Florida 33199, USA.
}\email{smedinaperalta@gmail.com}

\maketitle
\begin{abstract}

We study the logarihtnmic asymptotic of multiple orthogonal polynomials arising in a mixed type Hermite-Pad\'e approximation problem associated with the rational perturbation of a Nikishin system of functions. The formulas obtained allow to give exact estimates of the rate of convergence of the corresponding Hermite-Pad\'e approximants.

	\textbf{Keywords:} Nikishin system, multiple orthogonal polynomials, Hermite-Pad\'e approximation, logarithmic asymptotic
\end{abstract}

\section{Introduction}

The birth of Hermite-Pad\'e approximation is linked with their application in number theory. In recent years, this scheme of approximation has been found to be useful in many other fields of mathematics. The type of Hermite-Pad\'e polynomials studied here first appear in the problem of finding discrete solutions to the Degasperis-Procesi equation, see \cite{LS} and \cite{3}. Motivated by that application the definition was extended to general Nikishin systems of functions in \cite{Lago_Sergio_Jacek} where their convergence was proved. Later, see \cite{Lago_Sergio_Ulises}, the logarithmic and ratio asymptotic behavior of the associated Hermite-Pad\'e polynomials was given. In \cite{GLM}, we further extended the definition  to systems of functions obtained through the rational perturbation of Nikishin systems and proved their convergence. See \cite{Lago_Sergio1} for an analogous problem related with type I Hermite-Pad\'e approximation. Here, we  provide their logarithmic asymptotic behavior and use it to give exact estimates of the rate of convergence.

\subsection{Nikishin systems}

Nikishin systems of functions were introduced in \cite{nikishin}. Such systems of functions have proved to be appropriate in the attempt of extending the general theory of orthogonal polynomials on the real line and the theory of Pad\'e approximation to multiple orthogonal polynomials and Hermite-Pad\'e approximation.

In the sequel, we will only consider Borel measures $s$ with constant sign, finite moments $c_n = \int x^n \D s (x), |c_n| < \infty$, $n\in\Z_+$, whose support consists of infinitely many points, and $\supp s\subset \R$. We will denote by $\Delta$ the smallest interval which contains $\supp s$, i.e. its convex hull. The class of these measures will be denoted by $\mathcal{M}(\Delta)$. Let
$$\widehat{s}(z) = \int \frac{\D s(x)}{z-x}$$
denote the Cauchy transform of the measure $s$. Obviously, $\widehat{s}(z)$ is holomorphic in $\C\diff\Delta$ and we can associate to $\widehat{s}$ its formal Taylor expansion at infinity
$$\widehat{s}(z)\sim \sum_{j=0}^\infty\frac{c_j}{z^{j+1}},\qquad c_j = \int x^j\D s(x).$$
In the present work, $s$ is finite and has compact support; therefore, its moments are all finite and the expansion above is convergent in a neighborhood of infinity.

Let $\Delta_\alpha$, $\Delta_\beta$ be two compact intervals contained in $\mathbb{R}$ such that $\Delta_\alpha\cap\Delta_\beta=\emptyset$. Consider the measures $\sigma_\alpha\in\mathcal{M}(\Delta_\alpha), \sigma_\beta\in\mathcal{M}(\Delta_\beta)$. Define
$$\D\langle \sigma_\alpha,\sigma_\beta\rangle(x): = \widehat{\sigma}_\beta(x)\D\sigma_\alpha(x).$$
This product of measures is neither commutative nor associative.

\begin{definition}
\label{Nikishin_sys}
Take a collection $\Delta_j$, $j=1,\ldots,m$ of bounded intervals such that
$$\Delta_j\cap\Delta_{j+1}=\emptyset,\; j=1,\ldots,m-1.$$
Let $(\sigma_1,\ldots,\sigma_m)$ be a system of measures such that $\Co(\supp \sigma_j)=\Delta_j$, $\sigma_j\in\mathcal{M}(\Delta_j)$, $j=1,\ldots,m$. We say that  $(s_{1,1},\ldots,s_{1,m})=\mathcal{N}(\sigma_1,\ldots,\sigma_m)$, where
$$s_{1,1}=\sigma_1,\quad s_{1,2}=\langle\sigma_1,\sigma_2 \rangle,\; \ldots, \quad s_{1,m}=\langle \sigma_1, \langle \sigma_2,\ldots,\sigma_m \rangle\rangle,$$
is the Nikishin system of measures generated by $(\sigma_1,\ldots,\sigma_m)$. The vector $(\widehat{s}_{1,1},\ldots,\widehat{s}_{1,m})$ is called a Nikishin system of functions.
\end{definition}

In \cite{Lago_Sergio_Jacek} and \cite{GLM} the generating measures of the Nikishin system are allowed to be supported on unbounded intervals. The results of this paper require that the supports be bounded so, for simplicity, we have included that restriction in the definition as was done in the original version \cite{nikishin}.  In the sequel, for $1\leq j\leq k\leq m$ we write
$$s_{j,k} := \langle \sigma_j, \sigma_{j+1},\ldots, \sigma_k \rangle,\qquad s_{k,j} := \langle \sigma_k, \sigma_{k-1},\ldots, \sigma_j \rangle.$$

\subsection{Multi-level Hermite-Pad\'e polynomials}

Let us  define the approximation objects.
\begin{definition}\label{defML}
Consider the Nikishin system $\mathcal{N}(\sigma_1,\ldots,\sigma_m)$. Let $\displaystyle r_j = {v_j}/{t_j}$, $k=1,\ldots,m$, be rational fractions with real coefficients, $\deg v_k<\deg t_k=d_k$, $(v_k,t_k)=1$ (coprime) for all $k=1,\ldots,m$. For each $n\in\N$, there exist polynomials $a_{n,0}, a_{n,1},\ldots,a_{n,m}$, with $\deg a_{n,j}\leq n-1$, $j=0,1,\ldots,m-1, \deg a_{n,m}\leq n$, not all identically equal to zero, called multi-level (ML) Hermite-Pad\'e polynomials that verify
\begin{align}
    \mathcal{A}_{n,0} :=& \left[ a_{n,0} + \sum_{k=1}^m (-1)^k a_{n,k}(\widehat{s}_{1,k}+r_k) \right] \in \mathcal{O}\left(\frac{1}{z^{n+1}}\right), \label{Problem1}\\
    \mathcal{A}_{n,j} :=& \left[(-1)^j a_{n,j} + \sum_{k=j+1}^m (-1)^k a_{n,k}\widehat{s}_{j+1,k}\right] \in \mathcal{O}\left(\frac{1}{z}\right), \quad j=1,\ldots,m-1.\label{Problem2}
\end{align}
Here and in the sequel $\mathcal{O}(\cdot)$ is as $z \to \infty$ along paths non tangential to the support of the measures involved. For completeness we denote $\mathcal{A}_{n,m} := (-1)^m a_{n,m}$.
\end{definition}

When $r_k \equiv 0, k=1,\ldots,m,$ this construction was introduced in \cite{Lago_Sergio_Jacek}.
Notice that the interpolation conditions involve the Nikishin systems $\mathcal{N}(\sigma_1,\ldots,\sigma_m)$, $\mathcal{N}(\sigma_2,\ldots,\sigma_m)$, \ldots, $\mathcal{N}(\sigma_m) = (s_{m,m})$. It is easy to justify that for each $n \in \mathbb{N}$ the ML Hermite-Pad\'e polynomials exist; however, they are not uniquely determined.

Without loss of generality, we can assume that the polynomials $t_k$ are monic.
Let $T = \lcm (t_1,\ldots,t_m), \deg T = D$,   where $\lcm$ stands for least common multiple. Set
\[ f := \widehat{s}_{m,1} - \sum_{k=1}^{m-1} (-1)^{k}\widehat{s}_{m,k+1}r_k - (-1)^m r_m.
\]

In \cite[Theorem 1.1]{GLM} it was proved that if the zeros $T$ lie in the complement of $\Delta_1 \cup \Delta_m$ and $f$ has exactly $D$ poles in $\C \setminus \Delta_m$,  then
\begin{equation} \label{convergencia}
\lim_{n\to \infty}\frac{a_{n,j}}{a_{n,m}} = \widehat{s}_{m,j+1}, \quad j=1,\ldots,m-1, \qquad   \lim_{n\to \infty} \frac{a_{n,0}}{a_{n,m}} = f(z)
\end{equation}
uniformly on compact subsets of $\C\diff(\Delta_m \cup \{z:T(z) = 0\})$. Under the present assumptions,  $f$  has $D$ poles in $\C \setminus \Delta_m$ if and only if for each $\zeta, T(\zeta) = 0$,
\[ \lim_{z \to \zeta} (z-\zeta)^{\tau}f(z) =   - \sum_{k=1}^{m-1} (-1)^{k}\widehat{s}_{m,k+1}(\zeta) \lim_{z\to \zeta} (z-\zeta)^\tau r_k(z) - (-1)^m \lim_{z\to \zeta} (z-\zeta)^\tau r_m(z) \neq 0.
\]
where $\tau$ is the multiplicity of $\zeta$. This is true, for example,  if $(t_j,t_k) = 1, 1\leq j,k \leq m$.

In this paper, we  have assumed that the intervals $\Delta_j$ (in particular $\Delta_m$) are bounded, so convergence takes place in \eqref{convergencia} with geometric rate, see \cite[Corollary 3.4]{GLM}. We aim  to provide the exact order of convergence  (see Theorem \ref{convergence_speed} below). For this purpose, we need to study the logarithmic asymptotic of the sequence of polynomials $\left(a_{n,m}\right)_{n \in \mathbb{N}}$ and the sequences of  forms $\left(\mathcal{A}_{n,j}\right)_{n \in \mathbb{N}}, j=0,\ldots,m-1.$
This is done using methods of potential theory.

\subsection{Statement of the main results}

In the sequel, we assume that $\supp \sigma_k$ is a regular compact set for $k=1,\ldots,m$; that is,  Green's function of the region $\C \setminus \supp \sigma_k$ with singularity at $\infty$ can be extended continuously to $\supp \sigma_k$. Let $\mathcal{M}_1(\supp \sigma_k)$ be the subclass of probability measures in $\mathcal{M}(\supp \sigma_k)$. Define
$$\mathcal{M}_1 = \mathcal{M}_1(\supp \sigma_1)\times \cdots \times \mathcal{M}_1(\supp \sigma_m).$$
Let
\[ V^{\mu}(z) := \int \log\frac{1}{|z-x|} \D \mu (x)
\]
denote the logarithmic potential of the measure $\mu$.

It is well known (see, for example, \cite[Section 4]{bello_lago}), that there exists a unique vector measure $\vec{\lambda}= (\lambda_1,\ldots,\lambda_m) \in \mathcal{M}_1$ and a unique vector constant $\omega^{\vec{\lambda}} = (\omega_1^{\vec{\lambda}},\ldots,\omega_m^{\vec{\lambda}})$ such that
\begin{equation} -\frac{1}{2} V_{j-1}^{\vec{\lambda}}(x) +  V_{j}^{\vec{\lambda}}(x)-\frac{1}{2} V_{j+1}^{\vec{\lambda}}(x)= \omega_j^{\vec{\lambda}},\qquad x\in\supp \lambda_j, \qquad j=1,\ldots,m.  \label{vecequil}
\end{equation}
(By convention $V_{0}^{\vec{\lambda}} \equiv V_{m+1}^{\vec{\lambda}} \equiv 0$.) The vector measure $\vec{\lambda}$ is called equilibrium measure for the system of compact sets $\supp \sigma_k , k=1,\ldots,m$ with interaction matrix $\mathcal{C}_\mathcal{N} = \left(c_{j,k}\right), 1\leq,j,k\leq m$, where
$c_{j,j} = 1, j=1,\ldots,m, c_{j-1,j}= -1/2, j=2,\ldots,m, c_{j,j+1} = -1/2, j=1,\ldots,m-1$, and the rest of entries equal zero. Notice that the left hand of \eqref{vecequil} is the product of the $j$-th row of $\mathcal{C}_\mathcal{N}$ times the vector potential $(V_{1}^{\vec{\lambda}},\ldots, V_{m}^{\vec{\lambda}})$.

The vector equilibrium measure allows to describe the normalized distribution of the zeros of the polynomials $a_{n,m}$ and roots of the forms $\mathcal{A}_{n,j},j=1,\ldots,m-1$. From \cite[Theorem 1.1, Corollary 3.1]{GLM} it follows that when all the zeros of $T$ lie in the complement of $\Delta_1 \cup \Delta_m$ and $f$ has exactly $D$ poles in $\C \setminus \Delta_m$ then for all sufficiently large $n > N$:
\begin{itemize}
\item $\deg a_{n,m} = n$ with exactly $n-D$ simple zeros on $\Delta_m$ and the remaining $D$ zeros of $a_{n,m}$ converge to the poles of $f$ in $\C \setminus \Delta_m$ according to their multiplicity.
\item $\mathcal{A}_{n,j}, j=1,\ldots,m-1,$ has exactly $n-D$ zeros in $\C \setminus \Delta_{j+1}$ they are simple and lie in $\Delta_j$.
\end{itemize}
In the rest of the paper, we assume that the zeros of $T$ lie in the complement of $\Delta_1 \cup \Delta_m$, that $f$ has exactly $D$ poles in $\C \setminus \Delta_m$, and $n > N$.

Given $n \in \mathbb{N}$, if any solution of \eqref{Problem1}-\eqref{Problem2} has $\deg a_{n,m} = n$ then it is easy to verify that $(a_{n,0},\ldots,a_{n,m})$ is uniquely determined except for a constant factor. Without further notice, we normalize $(a_{n,0},\ldots,a_{n,m}), n > N,$ so that $a_{n,m}$ is monic.

Let $Q_{n,j}, j=1,\ldots,m$ be the monic polynomial of degree $n-D$ whose zeros are the roots of $\mathcal{A}_{n,j}$ on $\Delta_j$. (Recall that $\mathcal{A}_{n,m} = (-1)^m a_{n,m}$.)
\begin{equation}
     \mathcal{H}_{n,j} = \frac{Q_{n,j+1}T\mathcal{A}_{n,j}}{Q_{n,j}}, \quad j=0,1, \qquad \; \mathcal{H}_{n,j} = \frac{Q_{n,j+1}\mathcal{A}_{n,j}}{Q_{n,j}},\quad j=2,\ldots,m.\nonumber
\end{equation}
By convention $Q_{n,0} \equiv Q_{n,m+1} \equiv 1$ and $\Delta_{m+1} = \emptyset$. Notice that $\mathcal{A}_{n,j}/Q_{n,j} \in \mathcal{H}(\C \setminus \Delta_{j+1}), j=0,\ldots,m$.

Given a polynomial $Q$ the associated normalized zero counting measure is denoted
\begin{equation*}
    \mu_Q := \frac{1}{\deg(Q)}\sum_{Q(x)=0}\delta_x,
\end{equation*}
where $\delta_x$ is the Dirac measure with mass $1$ at $x$,

A measure $\sigma\in\mathcal{M}(\Delta)$ is called regular if
\begin{equation*}
    \lim_{n\to \infty} \gamma_n^{1/n} = \frac{1}{\capac(\supp\sigma)}
\end{equation*}
where $\capac(\supp\sigma)$ denotes the logarithmic capacity of the support of $\sigma$ and $\gamma_n$ is the leading coefficient of the $n$-th orthonormal polynomial with respect to $\sigma$. This condition will be denoted $\sigma\in\Reg$. Many equivalent forms of defining regular measures may be seen in \cite[Chapter 3]{Stahl_Totik}.

\begin{theorem} \label{teo:1} Assume that all the zeros of $T$ lie in the complement of $\Delta_1 \cup \Delta_m$, and $f$ has exactly $D$ poles in $\C \setminus \Delta_m$. Suppose that $\sigma_j\in \Reg$ and $\supp\sigma_j, j=1,\ldots,m$. Then,
\begin{equation}
    *\lim_{n\to \infty} \mu_{Q_{n,j}}=\lambda_j,\;\; j=1,\ldots,m \label{weak_Qnj}
\end{equation}
where $\vec{\lambda} = (\lambda_1,\ldots,\lambda_m)\in\mathcal{M}_1$ is the vector equilibrium measure determined by the matrix $\mathcal{C}_\mathcal{N}$ on the system of compact sets $\supp\sigma_j$, $j=1,\ldots,m$. Moreover,
\begin{equation}
    \lim_{n\to \infty} \left| \int Q^2_{n,j}(x)\frac{\mathcal{H}_{n,j}(x)\D \sigma_j(x)}{Q_{n,j-1}(x)Q_{n,j+1}(x)}\right|^{1/2n} = \exp\left(-\sum_{k=j}^m \omega_k^{\vec{\lambda}}\right), \label{limit_nesimo}
\end{equation}
where $\omega^{\vec{\lambda}} = (\omega_1^{\vec{\lambda}},\ldots,\omega_m^{\vec{\lambda}})$ is the vector equilibrium constant.
\end{theorem}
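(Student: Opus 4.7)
I would follow the Gonchar--Rakhmanov potential-theoretic method for vector orthogonal polynomials, adapting to the rational-perturbation setting the argument used in the unperturbed case in \cite{Lago_Sergio_Ulises}. The two conclusions should be established simultaneously: along any subsequence on which the vector of zero counting measures $(\mu_{Q_{n,1}},\ldots,\mu_{Q_{n,m}})$ converges weakly-$*$, sharp $n$-th root bounds for the norm $h_{n,j}$ on the left of \eqref{limit_nesimo} will force the limit $\vec\lambda^{*}$ to solve \eqref{vecequil}; uniqueness then gives \eqref{weak_Qnj} for the full sequence, and substituting $\vec\lambda^{*}=\vec\lambda$ back into the bounds yields \eqref{limit_nesimo}.

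\emph{Orthogonality.} As a preparatory step I would unfold the Nikishin hierarchy recursively. Starting from the interpolation conditions \eqref{Problem1}--\eqref{Problem2} together with the maximal order of vanishing $\mathcal{A}_{n,0}=\mathcal{O}(z^{-n-1})$ at infinity, integrating $\mathcal{A}_{n,j}(z)\,z^{\nu}$ along expanding contours and using analyticity of $\widehat{s}_{j+1,k}$ off $\Delta_{j+1}$ together with the Sokhotski--Plemelj relations on $\Delta_j$ should produce a chain of full orthogonality relations of the form
\[
\int x^{\nu}\,Q_{n,j}(x)\,\frac{\mathcal{H}_{n,j+1}(x)}{Q_{n,j-1}(x)\,Q_{n,j+1}(x)}\,\D\sigma_j(x)=0,\quad \nu=0,\ldots,n-D-1,\;\; j=1,\ldots,m,
\]
in which the polynomial $T$ enters the weight only at the levels $j=0,1$, recording precisely the $D$ zeros of $f$ in $\C\setminus\Delta_m$ that the perturbation peels off, and the degree of $Q_{n,j}$ has been shifted from $n$ to $n-D$ accordingly.

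\emph{Potential-theoretic bounds and identification of the limit.} Using $\sigma_j\in\Reg$ and the regularity of $\supp\sigma_j$, I would apply weighted Chebyshev extremality to the orthogonality above (upper bound) together with a Bernstein--Walsh / Cauchy--Schwarz estimate (lower bound) to show that along any weakly convergent subsequence
\[
\lim_{n\to\infty}\tfrac{1}{2n}\log h_{n,j}=-W_{j}^{\vec\lambda^{*}},
\]
where $W_j^{\vec\lambda^{*}}$ is the appropriate linear combination of $V_{j-1}^{\vec\lambda^{*}},V_{j}^{\vec\lambda^{*}},V_{j+1}^{\vec\lambda^{*}}$ evaluated on $\supp\sigma_j$. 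The $D$ stray zeros, the fixed factor $T$, and the $D$ poles of $f$ contribute nothing to the $1/n$ limit since $D$ is bounded. Combining these identities with the principle of descent and the lower-semicontinuity of logarithmic potentials, I would then deduce that on each $\supp\sigma_j$
\[
-\tfrac12 V_{j-1}^{\vec\lambda^{*}}(x)+V_{j}^{\vec\lambda^{*}}(x)-\tfrac12 V_{j+1}^{\vec\lambda^{*}}(x)=\omega_{j}^{*},\qquad x\in\supp\lambda_{j}^{*},
\]
so that $\vec\lambda^{*}=\vec\lambda$ by uniqueness of the vector equilibrium measure for $\mathcal{C}_{\mathcal N}$. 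Substituting $\vec\lambda$ into the expression for $W_j^{\vec\lambda^{*}}$ and unravelling it using \eqref{vecequil} will produce the telescoping tail sum $\sum_{k=j}^{m}\omega_k^{\vec\lambda}$ on the right of \eqref{limit_nesimo}.

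The hard part will be the preparatory orthogonality step. Extracting clean weighted orthogonality in the presence of the rational perturbation requires tracking throughout the recursion the asymmetric role of $T$ (which appears in $\mathcal{H}_{n,0}$ and $\mathcal{H}_{n,1}$ but not at the higher levels) and the degree shift $n\mapsto n-D$, while ensuring that the boundary values produced by the Sokhotski--Plemelj step at each level cleanly feed into the next. Once the chain is in place, the $n$-th root analysis and the identification of the equilibrium limit are essentially the GRS adaptation already carried out in \cite{Lago_Sergio_Ulises}.
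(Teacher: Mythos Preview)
Your proposal is correct and follows essentially the same route as the paper: derive the chain of varying-weight orthogonality relations (the paper obtains them via Cauchy's integral formula rather than Sokhotski--Plemelj, and there is a small index slip in your display---the weight should carry $\mathcal{H}_{n,j}$, not $\mathcal{H}_{n,j+1}$), then run the GRS potential-theoretic argument along weak-$*$ convergent subsequences to identify the limit with the vector equilibrium measure. The only organizational difference is that the paper packages your Chebyshev/Bernstein--Walsh step as a single cited lemma (Lemma~\ref{logarith_asymptotic}, from \cite{lago_ulises_sorokin}) and applies it by \emph{descending} induction on $j$ starting at $j=m$, where the varying weight $T_n/Q_{n,m-1}$ is explicit, so that the norm asymptotic at level $j+1$ feeds uniformly into the external field at level $j$; this is the mechanism that produces the telescoping tail $\sum_{k=j}^{m}\omega_k^{\vec{\lambda}}$ you anticipate.
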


From this result the logarithmic asymptotic behavior of the forms $\mathcal{A}_{n,j}$ can be derived.

\begin{theorem}
\label{logarithmic_asymptotic_Anj} Suppose that the assumptions of Theorem \ref{teo:1} are satisfied. Then,
\begin{equation}
\label{limit_nrooth_Anj}
    \lim_{n\to \infty} |\mathcal{A}_{n,j}(z)|^{1/n} = A_j(z),\qquad \mathcal{K}\subset\C\diff(\Delta_j\cup\Delta_{j+1}),\qquad j=1,\ldots,m-1
\end{equation}
where
\begin{equation*}
    A_j(z) = \exp\left(V^{\lambda_{j+1}}(z) - V^{\lambda_j}(z)-2\sum_{k=j+1}^m \omega_k^{\vec{\lambda}}\right), \qquad j=1,\ldots,m-1.
\end{equation*}
Moreover,
 \begin{equation*}
    \lim_{n\to \infty} |\mathcal{A}_{n,m}(z)|^{1/n} = \exp\left(-V^{\lambda_m}(z)\right),\qquad \mathcal{K}\subset\C\diff(\Delta_m\cup Z).
\end{equation*}
where $Z = \{z: T(z) =0\}$, and
\begin{equation*}
    \lim_{n\to \infty} |\mathcal{A}_{n,0}(z)|^{1/n} = \exp\left(V^{\lambda_1}(z)-2\sum_{k=1}^m \omega^{\vec{\lambda}}_k\right),\qquad \mathcal{K}\subset\C\diff(\Delta_1\cup Z).
\end{equation*}
$\vec{\lambda} = (\lambda_1,\ldots,\lambda_m)$ is the vector equilibrium measure and $(\omega^{\vec{\lambda}}_1,\ldots,\omega^{\vec{\lambda}}_m)$ is the vector equilibrium constant for the vector potential problem determines by the interaction matrix $\mathcal{C}_{\mathcal{N}}$ acting on the system of compact sets $\supp \sigma_j$, $j=1,\ldots,m$.
\end{theorem}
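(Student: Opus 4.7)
The plan is to combine the zero-distribution limit \eqref{weak_Qnj} and the norm-type limit \eqref{limit_nesimo} of Theorem \ref{teo:1} with a Cauchy integral representation of each form $\mathcal{H}_{n,j}$. I would first extract from \eqref{weak_Qnj}, via the lower envelope theorem applied to the probability measures $\mu_{Q_{n,j}}$, the uniform limit
$$\lim_{n\to\infty}\frac{1}{n}\log|Q_{n,j}(z)|=-V^{\lambda_j}(z)$$
on compact subsets of $\C\setminus\Delta_j$, for $j=1,\ldots,m$. The case $j=m$ of the theorem is then immediate because $\mathcal{A}_{n,m}=(-1)^m a_{n,m}$ is monic of degree $n$, its $n-D$ zeros on $\Delta_m$ coincide with those of $Q_{n,m}$, and the remaining $D$ zeros approach $Z$; hence $\mu_{a_{n,m}}\to\lambda_m$ weakly and $|\mathcal{A}_{n,m}(z)|^{1/n}\to\exp(-V^{\lambda_m}(z))$ on compacts of $\C\setminus(\Delta_m\cup Z)$.

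For $1\leq j\leq m-1$ the next step is the Cauchy representation
$$\mathcal{A}_{n,j}(z)=\int\frac{\mathcal{A}_{n,j+1}(x)}{z-x}\,\D\sigma_{j+1}(x),$$
which follows by writing $\widehat s_{j+1,k}(z)=\int\widehat s_{j+2,k}(x)(z-x)^{-1}\,\D\sigma_{j+1}(x)$ (with the convention $\widehat s_{j+2,j+1}\equiv 1$), substituting into \eqref{Problem2}, extracting the polynomial part via $a_{n,k}(z)=a_{n,k}(x)+(z-x)[\text{polynomial}]$, and observing that the resulting polynomial residue cancels the $(-1)^j a_{n,j}(z)$ term because $\mathcal{A}_{n,j}=\mathcal{O}(1/z)$. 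For $j=0$ the same manipulation, applied after multiplying by $T$ to clear the poles of the $r_k$, produces $T(z)\mathcal{A}_{n,0}(z)=\int T(x)\mathcal{A}_{n,1}(x)(z-x)^{-1}\,\D\sigma_1(x)$ together with the orthogonality $\int x^k T(x)\mathcal{A}_{n,1}(x)\,\D\sigma_1(x)=0$ for $k=0,\ldots,n-D-1$, which follows from the decay $T\mathcal{A}_{n,0}=\mathcal{O}(1/z^{n+1-D})$. Computing the jump of $\mathcal{H}_{n,j}$ on $\Delta_{j+1}$ from that of $\mathcal{A}_{n,j}$ and invoking the uniqueness of the Cauchy representation for a function holomorphic in $\C\setminus\Delta_{j+1}$ vanishing at infinity, one identifies
$$\mathcal{H}_{n,j}(z)=\int\frac{Q_{n,j+1}(x)[T(x)]\,\mathcal{A}_{n,j+1}(x)}{Q_{n,j}(x)\,(z-x)}\,\D\sigma_{j+1}(x)+P_{n,j}(z),$$
with the bracketed $T$ present only for $j=0,1$ and $P_{n,j}$ a polynomial of bounded degree, which is nonzero only when $j=1$, since only in that case the $T$-factor makes $\mathcal{H}_{n,j}$ fail to decay at infinity.

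The density $\D\nu_{n,j}(x)$ of the Cauchy integral above has constant sign on $\Delta_{j+1}$: $Q_{n,j}$ and $T$ keep a fixed sign there since their zeros lie off $\Delta_{j+1}$, $Q_{n,j+1}^2\geq 0$, and $\mathcal{A}_{n,j+1}/Q_{n,j+1}$ keeps its sign because the $n-D$ simple zeros of $\mathcal{A}_{n,j+1}$ in $\C\setminus\Delta_{j+2}$ are exactly those of $Q_{n,j+1}$. This sign property forces the Cauchy transform of the normalized measure $\nu_{n,j}/\|\nu_{n,j}\|$ to remain bounded above and uniformly away from zero on each compact $\mathcal{K}\subset\C\setminus\Delta_{j+1}$, so its $n$th root tends to $1$ uniformly on $\mathcal{K}$, and hence $|\mathcal{H}_{n,j}(z)|^{1/n}\sim\|\nu_{n,j}\|^{1/n}$ modulo the contribution of $P_{n,j}$. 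Now $\|\nu_{n,j}\|$ equals the absolute value of the integral in \eqref{limit_nesimo} with index $j+1$ (using $Q_{n,0}\equiv 1$ when $j=0$), so Theorem \ref{teo:1} yields $\|\nu_{n,j}\|^{1/n}\to\exp\bigl(-2\sum_{k=j+1}^m\omega_k^{\vec{\lambda}}\bigr)$. Substituting in $\mathcal{A}_{n,j}=Q_{n,j}\mathcal{H}_{n,j}/(Q_{n,j+1}[T])$ and combining with the $Q$-asymptotics of the first paragraph, together with $|T(z)|^{1/n}\to 1$, delivers the stated formula for $A_j(z)$ and, via the $j=0$ variant, the formula for $\mathcal{A}_{n,0}$.

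The main obstacle is the rigorous derivation of the Cauchy representation of $\mathcal{H}_{n,j}$ and, in particular, the control of the polynomial residue $P_{n,j}$ in the single index $j=1$, where the $T$-factor produces polynomial growth at infinity and its coefficients, computed from the expansion of $\mathcal{H}_{n,1}$ at infinity, must be checked to carry the same $n$th-root decay as $\|\nu_{n,1}\|$ so as to be absorbed into the limit. Once these representations are in place, the theorem reduces to comparing two consecutive instances of \eqref{limit_nesimo} and transferring a global $n$th-root asymptotic of total mass to a pointwise one via the constant-sign argument.
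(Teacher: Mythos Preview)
Your overall strategy coincides with the paper's: write $\mathcal{A}_{n,j}$ as a ratio of $Q$'s times the Cauchy transform of a constant-sign measure, use \eqref{weak_Qnj} for the $n$th root of the $Q$-ratio, use \eqref{limit_nesimo} for the $n$th root of the total mass, and observe that the normalized Cauchy transform has $n$th root tending to $1$ uniformly on compacta off the support. The cases $j=m$ and $j=0$ are handled just as you describe.

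The one substantive difference is your handling of $j=1$. You push the factor $T$ \emph{inside} the Cauchy integral for $\mathcal{H}_{n,1}$, which (since $\deg(Q_{n,2}T)=n$ exceeds the $n-D$ orthogonality conditions available) forces the polynomial residue $P_{n,1}$ you then flag as the ``main obstacle''. The paper sidesteps this entirely: in \eqref{H_mjIdentity_2} it brings only $Q_{n,2}(z)$ (degree $n-D$) inside via the orthogonality \eqref{Orth_H_nj} and leaves $T(z)$ outside,
\[
\mathcal{H}_{n,1}(z)=T(z)\int\frac{Q_{n,2}^2(x)}{z-x}\,\frac{\mathcal{H}_{n,2}(x)\,\D\sigma_2(x)}{Q_{n,1}(x)Q_{n,3}(x)}.
\]
When one then solves for $\mathcal{A}_{n,1}=Q_{n,1}\mathcal{H}_{n,1}/(Q_{n,2}T)$, the factor $T(z)$ cancels, and the paper obtains the single residue-free formula \eqref{Anj_Integral},
\[
\mathcal{A}_{n,j}(z)=\frac{Q_{n,j}(z)}{Q_{n,j+1}(z)}\int\frac{Q_{n,j+1}^2(x)}{z-x}\,\frac{\mathcal{H}_{n,j+1}(x)\,\D\sigma_{j+1}(x)}{Q_{n,j}(x)Q_{n,j+2}(x)},
\]
valid uniformly for $j=1,\ldots,m-1$ with no $T$ and no polynomial term. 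So the obstacle you identify is an artifact of your choice of representation and disappears once $T(z)$ is kept as an external multiplier; there is nothing extra to check at $j=1$.
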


Theorem \ref{logarithmic_asymptotic_Anj} allows us to provide in Theorem \ref{convergence_speed} the asymptotic estimates we look for.

\section{Auxiliary results}

\subsection{Some integral representations}

We begin by obtaning some integral representations which will be needed.

\begin{lemma}
\label{zeros_A_nj}
Assume that all the zeros of $T$ lie in the complement of $\Delta_1 \cup \Delta_m$, $f$ has exactly $D$ poles in $\C \setminus \Delta_m$, and $n > N \geq D$. Then, for each $j=1,\ldots,m-1$,
\begin{equation}
\label{A_njQ_nj_1}
    \frac{\mathcal{A}_{n,j}}{Q_{n,j}}(z) = \int_{\Delta_{j+1}}\frac{\mathcal{A}_{n,j+1}(x)}{z-x}\frac{\D\sigma_{j+1}(x)}{Q_{n,j}(x)}
\end{equation}
and
\begin{equation}
\label{A_njQ_nj_2}
    T(z) \mathcal{A}_{n,0}(z) =  \int_{\Delta_1}\frac{\mathcal{A}_{n,1}(x)T(x)}{z-x}\D\sigma_1(x).
\end{equation}
Moreover, for $j=1,\ldots,m-1$
\begin{equation}
\label{OrthA_njQ_nj_1}
    \int_{\Delta_{j+1}}x^\nu\mathcal{A}_{n,j+1}(x)\frac{\D\sigma_{j+1}(x)}{Q_{n,j}(x)} = 0,\qquad \nu=0,1,\ldots,n-D-1.
\end{equation}
and
\begin{equation}
\label{OrthA_njQ_nj_2}
     \int_{\Delta_1} x^\nu \mathcal{A}_{n,1}(x)T(x)\D\sigma_1(x)=0,\qquad \nu=0,1,\ldots,n-D-1.
\end{equation}
\end{lemma}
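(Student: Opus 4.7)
The plan is to first derive the ``plain'' integral representations (without $Q_{n,j}$) from the nested Cauchy-transform structure of the Nikishin system, then insert $Q_{n,j}(x)$ into the denominator by an algebraic/Liouville argument, and finally extract the orthogonalities by matching Laurent coefficients at infinity.

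Step 1 (plain integral representations). I would start from the Nikishin recursion
$$\widehat{s}_{j+1,k}(z) = \int \frac{\widehat{s}_{j+2,k}(x)\,\D\sigma_{j+1}(x)}{z-x}$$
(with the convention $\widehat{s}_{j+2,j+1}\equiv 1$ so that the case $k=j+1$ is included), substitute it into the definition of $\mathcal{A}_{n,j}$, and split each $a_{n,k}(z) = [a_{n,k}(z)-a_{n,k}(x)] + a_{n,k}(x)$ inside the resulting integrals. The differences $[a_{n,k}(z)-a_{n,k}(x)]/(z-x)$ are polynomials and produce polynomial contributions in $z$, while the surviving terms reassemble---by the very definition of $\mathcal{A}_{n,j+1}$---into $\int_{\Delta_{j+1}} \mathcal{A}_{n,j+1}(x)\,\D\sigma_{j+1}(x)/(z-x)$. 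The decay condition $\mathcal{A}_{n,j} = \mathcal{O}(1/z)$ then forces the polynomial remainder to be zero, yielding $\mathcal{A}_{n,j}(z) = \int_{\Delta_{j+1}} \mathcal{A}_{n,j+1}(x)\,\D\sigma_{j+1}(x)/(z-x)$ for $j=1,\ldots,m-1$. The same scheme, preceded by multiplication by $T(z)$, gives \eqref{A_njQ_nj_2}: the new ingredient is that $Tr_k$ is a polynomial (because $t_k \mid T$), so the rational perturbations contribute only to the polynomial part, and the stronger decay $T\mathcal{A}_{n,0} = \mathcal{O}(1/z^{n+1-D})$ kills that polynomial.

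Step 2 (inserting $Q_{n,j}$). Dividing the plain identity by $Q_{n,j}(z)$ and using
$$\frac{1}{(z-x)Q_{n,j}(z)} = \frac{1}{(z-x)Q_{n,j}(x)} - \frac{R(z,x)}{Q_{n,j}(z)Q_{n,j}(x)},$$
with $R(z,x) := [Q_{n,j}(z)-Q_{n,j}(x)]/(z-x)$ a polynomial of degree $n-D-1$ in each variable, rewrites
$$H(z) := \frac{\mathcal{A}_{n,j}(z)}{Q_{n,j}(z)} - \int_{\Delta_{j+1}} \frac{\mathcal{A}_{n,j+1}(x)\,\D\sigma_{j+1}(x)}{(z-x)Q_{n,j}(x)}$$
as a proper rational function $-P(z)/Q_{n,j}(z)$ with $\deg P \le n-D-1$. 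On the other hand, both terms defining $H$ are holomorphic on $\C \diff \Delta_{j+1}$: the first because $Q_{n,j}$ is built precisely to cancel the $n-D$ zeros of $\mathcal{A}_{n,j}$ lying off $\Delta_{j+1}$ (all on $\Delta_j$), and the second because $Q_{n,j}$ has no zeros on $\Delta_{j+1}$ (recall $\Delta_j \cap \Delta_{j+1} = \emptyset$). The only possible poles of $H$ therefore lie on $\Delta_j$ and must be absent; hence $Q_{n,j} \mid P$, and the degree bound forces $P \equiv 0$, which is \eqref{A_njQ_nj_1}.

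Step 3 (orthogonalities). Expanding the Cauchy-type integrals on the right of \eqref{A_njQ_nj_1} and \eqref{A_njQ_nj_2} in geometric series at infinity,
$$\int \frac{\phi(x)\,\D\mu(x)}{z-x} = \sum_{\nu \ge 0} \frac{1}{z^{\nu+1}} \int x^\nu \phi(x)\,\D\mu(x),$$
and noting that the left-hand sides are $\mathcal{O}(1/z^{n-D+1})$ (for \eqref{A_njQ_nj_1}, because $\mathcal{A}_{n,j} = \mathcal{O}(1/z)$ and $\deg Q_{n,j} = n-D$; for \eqref{A_njQ_nj_2}, because $T\mathcal{A}_{n,0} = \mathcal{O}(1/z^{n+1-D})$), the coefficients of $z^{-\nu-1}$ for $\nu = 0,\ldots,n-D-1$ must vanish, giving \eqref{OrthA_njQ_nj_1} and \eqref{OrthA_njQ_nj_2}. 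The main obstacle---really only a bookkeeping one---sits in Step 1: tracking signs and indices so that the surviving pieces assemble exactly into $\mathcal{A}_{n,j+1}$ (respectively $T\mathcal{A}_{n,1}$) and treating the boundary index $k=j+1$ via the convention $\widehat{s}_{j+2,j+1} \equiv 1$; the Liouville-style step in Step 2 is the conceptual heart of the proof.
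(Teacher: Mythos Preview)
Your argument is correct, but it follows a different path from the paper's. The paper works directly with the holomorphic quotient $\mathcal{A}_{n,j}/Q_{n,j}\in\mathcal{H}(\C\diff\Delta_{j+1})$ (respectively $T\mathcal{A}_{n,0}\in\mathcal{H}(\C\diff\Delta_1)$) and applies Cauchy's integral formula on a contour $\Gamma$ surrounding $\Delta_{j+1}$: the polynomial/rational pieces of the integrand are analytic inside $\Gamma$ and drop out, and after substituting the integral representation of $\widehat{s}_{j+1,k}$ and using Fubini plus Cauchy's formula inside $\Gamma$, the factor $1/Q_{n,j}(\zeta)$ automatically becomes $1/Q_{n,j}(x)$, yielding \eqref{A_njQ_nj_1}--\eqref{A_njQ_nj_2} in one stroke; the orthogonalities follow from the same contour with an extra $\zeta^\nu$. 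Your route is more elementary---no contour integration---but it pays for that by needing two stages: first the ``plain'' representation via the Nikishin recursion and polynomial splitting, and then a separate Liouville-type argument (your Step~2) to transfer $Q_{n,j}$ from $z$ to $x$. The paper's approach is shorter and treats the presence of $Q_{n,j}$ as a non-issue; yours isolates more clearly \emph{why} $Q_{n,j}$ can be moved inside, namely because the obstruction $P(z)$ has degree at most $n-D-1$ and would have to be divisible by $Q_{n,j}$.
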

\begin{proof} Notice that $T\mathcal{A}_{n,0} = \mathcal{O}(1/z^{n-D+1}) \in \mathcal{H}(\C \setminus \Delta_1)$. Let $\Gamma$ be a positively oriented closed Jordan curve which surrounds $\Delta_1$ so that $\Delta_2$ and $z$ remain in the unbounded connected component of the complement of $\Gamma$. We have
\[T(z)\mathcal{A}_{n,0}(z)= \frac{1}{2\pi i}\int_{\Gamma} \frac{(T\mathcal{A}_{n,0})(\zeta)}{z - \zeta}  \D \zeta = \frac{1}{2\pi i}\int_{\Gamma} \frac{(Ta_{n,0} + \sum_{k=1}^m (-1)^k a_{n,k}T r_k)(\zeta)}{z - \zeta} \D \zeta\]
\[ + \frac{1}{2\pi i}\int_{\Gamma} \frac{(T\sum_{k=1}^m (-1)^k a_{n,k}\widehat{s}_{1,k})(\zeta)}{z - \zeta} \D \zeta =
\]
\[ \int \frac{1}{2\pi i}\int_{\Gamma} \frac{(-T a_{n,1} + T\sum_{k=2}^m (-1)^k a_{n,k}\widehat{s}_{2,k})(\zeta)}{(z - \zeta)(\zeta -x)} \D \zeta \D \sigma_1(x) = \int \frac{(T\mathcal{A}_{n,1})(x)}{z-x} d \sigma_1(x).
\]
Indeed the first equality comes from Cauchy's integral formula for the complement of $\Gamma$. The second equality is trivial. Here, the first integral is zero since $(Ta_{n,0} + \sum_{k=1}^m (-1)^k a_{n,k}T r_k)(\zeta)/(z-\zeta)$ is analytic with respect to $\zeta$ inside $\Gamma$. Substituting in the second integral $\widehat{s}_{1,k}$ with its integral representation and using Fubini's theorem you get the third equality. The last equality comes from the use of Cauchy's integral formula inside $\Gamma$. Thus we obtain \eqref{A_njQ_nj_2}.

Similarly, since  $z^{\nu}T\mathcal{A}_{n,0} = \mathcal{O}(1/z^{2}) \in \mathcal{H}(\C \setminus \Delta_1), \nu=0,\ldots,n-D-1$, we obtain
\[ 0 = \frac{1}{2\pi i}\int_{\Gamma}  {\zeta^{\nu}(T\mathcal{A}_{n,0})(\zeta)}  \D \zeta =
\]
\[ \int \frac{1}{2\pi i}\int_{\Gamma} \frac{\zeta^{\nu}(-T a_{n,1} + T\sum_{k=2}^m (-1)^k a_{n,k}\widehat{s}_{2,k})(\zeta)}{ (\zeta -x)} \D \zeta \D \sigma_1(x) = \int  {x^{\nu}(T\mathcal{A}_{n,1})(x)}  d \sigma_1(x).
\]
which is \eqref{OrthA_njQ_nj_2}.

In order to derive \eqref{A_njQ_nj_1} and \eqref{OrthA_njQ_nj_1} one proceeds analogously. It is sufficient to use that
$z^{\nu}\mathcal{A}_{n,j}/Q_{n,j} = \mathcal{O}(1/z^2) \in \mathcal{H}(\C \setminus \Delta_{j+1}), j=1,\ldots,m-1, \nu =0,\ldots,n-D-1$ and take $\Gamma$ a positively oriented closed Jordan curve which surrounds $\Delta_{j+1}$ so that $\Delta_{j+2}\, (\Delta_{m+`1} = \emptyset)$ and $z$ remain in the unbounded connected component of the complement of $\Gamma$. The details are left to the reader.
\end{proof}

The previous lemma can be reformulated as follows.

\begin{lemma} Assume that all the zeros of $T$ lie in the complement of $\Delta_1 \cup \Delta_m$, $f$ has exactly $D$ poles in $\C \setminus \Delta_m$, and $n > N \geq D$. For each fixed  $j=0,\ldots,m-1$,
\begin{equation}
\label{Orth_H_nj}
    \int x^\nu Q_{n,j+1}(x)\frac{\mathcal{H}_{n,j+1}(x)\D\sigma_{j+1}(x)}{Q_{n,j}(x)Q_{n,j+2}(x)}=0, \qquad \nu=0,1,\ldots,n-D-1.
\end{equation}
Moreover, for $j=0,2,3,\ldots,m-1$
\begin{equation}
\label{H_mjIdentity_1}
    \mathcal{H}_{n,j}(z) = \int \frac{Q^2_{n,j+1}(x)}{z-x}\frac{\mathcal{H}_{n,j+1}(x)\D\sigma_{j+1}(x)}{Q_{n,j}(x)Q_{n,j+2}(x)},
\end{equation}
and
\begin{equation}
\label{H_mjIdentity_2}
    \mathcal{H}_{n,1}(z) = T(z)\int \frac{Q^2_{n,2}(x)}{z-x}\frac{\mathcal{H}_{n,2}(x)\D\sigma_{2}(x)}{Q_{n,1}(x)Q_{n,3}(x)}.
\end{equation}
Recall that by convention $Q_{n,0} \equiv Q_{n,m+1} \equiv 1$.
\end{lemma}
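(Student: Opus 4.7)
The plan is to derive each claim directly from the previous Lemma~\ref{zeros_A_nj} by unpacking the definition of $\mathcal{H}_{n,j}$ and performing a standard linearization trick for the Cauchy kernel. Recall that $\mathcal{H}_{n,0} = Q_{n,1}T\mathcal{A}_{n,0}$, $\mathcal{H}_{n,1} = Q_{n,2}T\mathcal{A}_{n,1}/Q_{n,1}$, and $\mathcal{H}_{n,k} = Q_{n,k+1}\mathcal{A}_{n,k}/Q_{n,k}$ for $k\geq 2$, so the ratios $\mathcal{H}_{n,j+1}/(Q_{n,j}Q_{n,j+2})$ telescope into simple expressions involving only $\mathcal{A}_{n,j+1}$, $Q_{n,j}$, and possibly the factor $T$.

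First I would establish the orthogonality \eqref{Orth_H_nj}. Substituting the definition of $\mathcal{H}_{n,j+1}$ and cancelling common $Q$-factors reduces the integrand $x^{\nu}Q_{n,j+1}\mathcal{H}_{n,j+1}/(Q_{n,j}Q_{n,j+2})$ to $x^{\nu}T(x)\mathcal{A}_{n,1}(x)$ when $j=0$ (since $Q_{n,0}\equiv 1$), and to $x^{\nu}\mathcal{A}_{n,j+1}(x)/Q_{n,j}(x)$ when $j\geq 1$. Integrated against $\D\sigma_{j+1}$, these are exactly the vanishing integrals \eqref{OrthA_njQ_nj_2} and \eqref{OrthA_njQ_nj_1} respectively; thus \eqref{Orth_H_nj} follows immediately.

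For the integral representations \eqref{H_mjIdentity_1} and \eqref{H_mjIdentity_2}, I would start from the left side using the definition of $\mathcal{H}_{n,j}$ and plug in the Cauchy-type formula from Lemma~\ref{zeros_A_nj}: \eqref{A_njQ_nj_2} when $j=0$ and \eqref{A_njQ_nj_1} when $j\geq 1$. This yields, in each case,
\[
\mathcal{H}_{n,j}(z) \;=\; P(z)\int \frac{\mathcal{A}_{n,j+1}(x)\,\D\sigma_{j+1}(x)}{(z-x)\,Q_{n,j}(x)}\cdot R(x),
\]
where $P(z)=Q_{n,j+1}(z)$ for $j\neq 1$, $P(z)=Q_{n,2}(z)T(z)$ for $j=1$, and $R(x)$ absorbs the factor $T(x)$ when $j=0$. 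On the other hand, substituting the definition of $\mathcal{H}_{n,j+1}$ into the claimed right side and cancelling produces the \emph{same} integrand but with $Q_{n,j+1}(x)$ inside the integral, not $Q_{n,j+1}(z)$ outside (and, in the $j=1$ case, with the factor $T(z)$ pulled out front).

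The only substantive step is to justify that these two expressions agree, which amounts to the identity
\[
Q_{n,j+1}(z)\!\int\!\frac{g(x)\,\D\sigma_{j+1}(x)}{(z-x)Q_{n,j}(x)} - \!\int\!\frac{Q_{n,j+1}(x)g(x)\,\D\sigma_{j+1}(x)}{(z-x)Q_{n,j}(x)} = \!\int\!\frac{Q_{n,j+1}(z)-Q_{n,j+1}(x)}{z-x}\,\frac{g(x)\,\D\sigma_{j+1}(x)}{Q_{n,j}(x)},
\]
with $g(x)=\mathcal{A}_{n,j+1}(x)$ or $T(x)\mathcal{A}_{n,1}(x)$ according to the case. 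Since $(Q_{n,j+1}(z)-Q_{n,j+1}(x))/(z-x)$ is a polynomial in $x$ of degree $n-D-1$, this right-hand side is a linear combination of the orthogonality integrals \eqref{OrthA_njQ_nj_1} (or \eqref{OrthA_njQ_nj_2} in the $j=0$ case) and therefore vanishes. I do not foresee any real obstacle; the only bookkeeping care is the separate treatment of $j=0$ and $j=1$ due to the extra factor $T$ in the definitions of $\mathcal{H}_{n,0}$ and $\mathcal{H}_{n,1}$, which is why the case $j=1$ is stated separately as \eqref{H_mjIdentity_2} with $T(z)$ factored out.
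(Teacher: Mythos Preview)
Your proposal is correct and follows essentially the same route as the paper: both arguments reduce \eqref{Orth_H_nj} to the orthogonality relations \eqref{OrthA_njQ_nj_1}--\eqref{OrthA_njQ_nj_2} by unwinding the definition of $\mathcal{H}_{n,j+1}$, and both obtain the integral identities \eqref{H_mjIdentity_1}--\eqref{H_mjIdentity_2} via the difference-quotient trick $(Q_{n,j+1}(z)-Q_{n,j+1}(x))/(z-x)$, which is a polynomial of degree $n-D-1$ in $x$ and hence annihilated by the orthogonality. The only cosmetic difference is that the paper stays in the $\mathcal{H}$-notation throughout while you unpack to $\mathcal{A}$ and $Q$; the bookkeeping for the extra $T$-factors in the cases $j=0,1$ is handled the same way in both.
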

\begin{proof}
Formula (\ref{Orth_H_nj}) is  a restatement of  (\ref{OrthA_njQ_nj_1}) and (\ref{OrthA_njQ_nj_2}) using the notation of the functions $\mathcal{H}_{n,j}$.

Since $\deg Q_{n,j+1} =n-D$, from (\ref{Orth_H_nj}) we deduce that for $j=0,\ldots,m-1$
\begin{equation}
    \int \frac{Q_{n,j+1}(z)-Q_{n,j+1}(x)}{z-x}Q_{n,j+1}(x)\frac{\mathcal{H}_{n,j+1}(x)\D\sigma_{j+1}(x)}{Q_{n,j}(x)Q_{n,j+2}(x)}=0.\nonumber
\end{equation}
This last identity can be rewritten as
\begin{equation*}
    Q_{n,j+1}(z)\int \frac{Q_{n,j+1}(x)}{z-x}\frac{\mathcal{H}_{n,j+1}(x)\D\sigma_{j+1}(x)}{Q_{n,j}(x)Q_{n,j+2}(x)} = \int \frac{Q^2_{n,j+1}(x)}{z-x}\frac{\mathcal{H}_{n,j+1}(x)\D\sigma_{j+1}(x)}{Q_{n,j}(x)Q_{n,j+2}(x)}.
\end{equation*}
For $j=2,\ldots,m-1$
\begin{equation*}
    \int \frac{Q_{n,j+1}(x)}{z-x}\frac{\mathcal{H}_{n,j+1}(x)\D\sigma_{j+1}(x)}{Q_{n,j}(x)Q_{n,j+2}(x)} = \int\frac{\mathcal{A}_{n,j}(x)}{z-x}\frac{\D\sigma_{j+1}(x)}{Q_{n,j}(x)} = \frac{\mathcal{A}_{n,j}(z)}{Q_{n,j}(z)}
\end{equation*}
and (\ref{H_mjIdentity_1}) immediately follows for $j=2,\ldots,m-1$. In the case $j=1$, notice that
\begin{equation*}
\int \frac{Q_{n,2}^2(x)}{z-x}\frac{\mathcal{H}_{n,2}(x)\D\sigma_2(x)}{Q_{n,1}(x)Q_{n,3}(x)} = \frac{Q_{n,2}(z)}{Q_{n,1}(z)}\mathcal{A}_{n,1}(z) = \frac{ \mathcal{H}_{n,1}(z)}{T(z)} ,
\end{equation*}
which is equivalent to (\ref{H_mjIdentity_2}). For $j=0$ we proceed as for (\ref{H_mjIdentity_1}),$j=2,\ldots,m-1$.
\end{proof}

The previous lemma indicates that the polynomial $Q_{n,j}$, $j=1,\ldots,m$, is orthogonal with respect to the varying measure
\begin{equation*}
    \frac{\mathcal{H}_{n,j}(x)\D\sigma_{j}(x)}{Q_{n,j-1}(x)Q_{n,j+1}(x)}.
\end{equation*}
This varying measure has constant sign because  $Q_{n,j-1}$ and $Q_{n,j+1}$ have constant sign on $\Delta_{j}$ and $\mathcal{H}_{n,j}$ also has constant sign since $Q_{n,j}$ takes away the zeros of $\mathcal{A}_{n,j}$ on $\mathring{\Delta}_{j}$.

\subsection{Preliminaries from potential theory}

The weak asymptotic depends on the analytic properties of the measures considered. We need some basic results from potential theory which we summarize for the benefit of the reader.

\begin{lemma}
Let $E\subset\R$ be a regular compact set and $\phi$ a continuous function on $E$. Then, there exist a unique $\lambda\in\mathcal{M}_1(E)$ and a constant $w$ such that
\begin{equation*}
    V^\lambda(z) + \phi(z)
    \begin{cases}
    \leq w, & z\in \supp\lambda,\\
    \geq w, & z\in E.
    \end{cases}
\end{equation*}
\end{lemma}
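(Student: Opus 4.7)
The plan is to cast this as a weighted equilibrium problem and apply the classical Frostman-type variational argument. Define the weighted energy functional
\[
I_\phi(\mu) := \iint \log\frac{1}{|x-y|}\,\D\mu(x)\,\D\mu(y) + 2\int \phi(x)\,\D\mu(x),\qquad \mu\in\mathcal{M}_1(E).
\]
Since $E$ is compact and $\phi$ is continuous (hence bounded), $I_\phi$ is bounded below, and the double integral is lower semicontinuous on $\mathcal{M}_1(E)$ in the weak-$*$ topology while the linear term $\int\phi\,\D\mu$ is continuous. Because $\mathcal{M}_1(E)$ is weak-$*$ compact, a minimizer $\lambda$ exists. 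Set $w := I_\phi(\lambda) - \int V^\lambda\,\D\lambda = \int(V^\lambda+\phi)\,\D\lambda$.

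To derive the inequalities, for any $\nu\in\mathcal{M}_1(E)$ consider the perturbation $\lambda_\varepsilon := (1-\varepsilon)\lambda + \varepsilon\nu$ with $\varepsilon\in(0,1)$. Expanding $I_\phi(\lambda_\varepsilon) \geq I_\phi(\lambda)$ and letting $\varepsilon\downarrow 0$ yields
\[
\int(V^\lambda + \phi)\,\D\nu \;\geq\; \int(V^\lambda + \phi)\,\D\lambda \;=\; w.
\]
Taking $\nu = \delta_z$ (when admissible) or, more carefully, normalized restrictions of $\lambda$ to small balls, one gets $V^\lambda(z)+\phi(z)\geq w$ quasi-everywhere on $E$. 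The key upgrade uses the hypothesis that $E$ is regular: then $V^\lambda$ is continuous on $\C$ (since $\phi$ is continuous on $E$ and any polar exceptional set is empty for regular $E$ by the continuity principle), so the inequality $V^\lambda+\phi\geq w$ propagates from quasi-everywhere to everywhere on $E$. Conversely, if $V^\lambda(z_0)+\phi(z_0)>w$ at some $z_0\in\supp\lambda$, continuity forces the same strict inequality on a $\lambda$-neighborhood of $z_0$, which combined with the opposite inequality on all of $E$ would make $\int(V^\lambda+\phi)\,\D\lambda > w$, contradicting the definition of $w$; hence $V^\lambda+\phi\leq w$ on $\supp\lambda$.

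Uniqueness is a consequence of the strict convexity of the logarithmic energy on signed measures of total mass zero: if $\lambda_1,\lambda_2$ were two minimizers, then $\tfrac12(\lambda_1+\lambda_2)$ would have energy strictly smaller than $\tfrac12(I_\phi(\lambda_1)+I_\phi(\lambda_2))$ unless $\lambda_1-\lambda_2\equiv 0$, by the positive-definiteness of the logarithmic kernel on the class of signed measures with compact support, finite energy, and zero total mass. This standard fact forces $\lambda_1 = \lambda_2$, and the constant $w$ is then determined by $\lambda$. The main obstacle in the argument is the passage from the q.e.\ inequality to the pointwise one; this is where the regularity of $E$ is used in an essential way, together with continuity of the potential $V^\lambda$, which in turn relies on $\phi$ being continuous on $E$.
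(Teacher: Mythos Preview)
The paper does not supply its own proof of this lemma; immediately after the statement it simply cites \cite[Theorem I.1.3]{safftotik} for the general Frostman-type result (with the second inequality holding only quasi-everywhere) and \cite[Theorem I.4.8]{safftotik} for the upgrade from q.e.\ to everywhere under regularity of $E$. Your sketch follows exactly this classical variational route, so in substance you are reproducing the argument the paper defers to.

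Two small points are worth tightening. First, your displayed definition of $w$ contains an algebraic slip: $I_\phi(\lambda) - \int V^\lambda\,\D\lambda = 2\int\phi\,\D\lambda$, which is not $\int(V^\lambda+\phi)\,\D\lambda$ in general; the intended identity is $w = I_\phi(\lambda) - \int\phi\,\D\lambda = \int(V^\lambda+\phi)\,\D\lambda$. Second, your justification of the q.e.-to-everywhere step is slightly circular as written: you invoke continuity of $V^\lambda$ to upgrade the inequality, but the standard way to obtain that continuity is \emph{after} one already knows $V^\lambda + \phi = w$ on $\supp\lambda$ (via the continuity principle). The cleaner order is: (i) obtain $V^\lambda+\phi\leq w$ on $\supp\lambda$ directly from lower semicontinuity of $V^\lambda+\phi$ and minimality, since $\{V^\lambda+\phi > w\}$ is relatively open and carries positive $\lambda$-mass if nonempty; (ii) obtain $\geq w$ q.e.\ on $E$ from the first-variation inequality; (iii) invoke regularity of $E$ (non-thinness at every point, as in \cite[Theorem I.4.8]{safftotik}) to erase the polar exceptional set. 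None of this affects the overall correctness of your approach.
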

In particular, equality holds on all $\supp\lambda$. In case that $E$ is not regular with respect to the Dirichlet problem, the second part of the statement is true except on a set $e$ such that $\capac(e)=0$. The proof of the lemma above in this context can be found in \cite[Theorem I.1.3]{safftotik}. When $E$ is regular, it is well known that this inequality except on a set of capacity zero implies the inequality for all points in the set, \cite[Theorem I.4.8]{safftotik}. The measure $\lambda$ is called the equilibrium measure in the presence of the external field $\phi$ on $E$, and $w$ the equilibrium constant.

We will need another tool for the proof of the asymptotic zero distribution of the polynomials $Q_{n,j}$. Different version of the next lemma can be found in \cite{gonrakh}, and \cite{Stahl_Totik}. The proof in \cite{gonrakh} was completed assuming that $\supp\sigma$ is an interval on which $\sigma'>0$ a.e. Theorem 3.3.3 in \cite{Stahl_Totik} does not cover the type of external field we need to consider. As stated here, the proof appears in \cite[Lemma 4.2]{lago_ulises_sorokin}

\begin{lemma}
\label{logarith_asymptotic}
Assume that $\sigma\in\Reg$ and $\supp\sigma\subset\R$ is regular. Let $\{\phi_n\}$, $n\in\Lambda\subset\Z_+$, be a sequence of positive continuous functions on $\supp\sigma$ such that
\begin{equation*}
    \lim_{n\in\Lambda}\frac{1}{2n}\log\frac{1}{|\phi_n(x)|} = \phi(x)>-\infty, \label{lim_log_phin}
\end{equation*}
uniformly on $\supp\sigma$. Let $\{q_n\}$, $n\in\Lambda$, be a sequence of monic polynomials such that $\deg q_n=n$ and
\begin{equation*}
    \int x^kq_n(x)\phi_n(x)\D\sigma(x) = 0, \qquad k=0,1,\ldots,n-1.
\end{equation*}
Then,
\begin{equation*}
    *\lim_{n\in\Lambda}\mu_{q_n}=\lambda
\end{equation*}
and
\begin{equation*}
    \lim_{n\in\Lambda}\left(\int |q_n(x)|^2\phi_n(x)\D\sigma(x)\right)^{1/2n} = e^{-w}
\end{equation*}
where $\lambda$ and $w$ are the equilibrium measure and equilibrium constant in the presence of the external field $\phi$ on $\supp\sigma$.
\end{lemma}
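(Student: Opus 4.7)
The plan is the classical sandwich argument for orthogonal polynomials with varying weights, adapted to the external-field setting. I would match $\limsup$ and $\liminf$ of
$$J_n := \Bigl(\int |q_n(x)|^2\phi_n(x)\D\sigma(x)\Bigr)^{1/2n}$$
with $e^{-w}$, and then read off the weak-$*$ convergence $\mu_{q_n}\to\lambda$ from the uniqueness of the equilibrium in the preceding lemma.

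\emph{Upper bound.} I would use the $L^2$ extremality of $q_n$: for any monic polynomial $P_n$ of degree $n$,
$J_n^{2n}\le \int P_n^2\phi_n\D\sigma$.
Take $P_n$ to be a Fekete-type discretization of $\lambda$ on $\supp\lambda$, so that $\mu_{P_n}\to\lambda$ weakly and, by the upper envelope theorem,
$$\limsup_{n\in\Lambda}\frac{1}{n}\log|P_n(x)|\le -V^\lambda(x)$$
uniformly on $\supp\sigma$. Combined with the uniform limit $-\frac{1}{2n}\log\phi_n\to\phi$ and the variational inequality $V^\lambda+\phi\ge w$ on $\supp\sigma$ (from the preceding lemma), this gives $\frac{1}{2n}\log(P_n^2\phi_n)\le -w+o(1)$ uniformly on $\supp\sigma$, hence $\limsup J_n\le e^{-w}$.

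\emph{Weak compactness and lower bound.} Since $q_n$ is orthogonal with respect to a positive measure on $\R$, all its zeros lie in the convex hull of $\supp\sigma$. Pass to a subsequence $\Lambda'\subset\Lambda$ along which $\mu_{q_n}\to\mu$ weakly. The principle of descent gives $\liminf_{\Lambda'}\frac{1}{n}\log|q_n(x)|^{-1}\ge V^\mu(x)$ for every $x\in\C$, with equality quasi-everywhere. The assumption $\sigma\in\Reg$ is equivalent to the asymptotic inequality $\|p_n\|_{L^\infty(\supp\sigma)}^{1/n}\lesssim \|p_n\|_{L^2(\sigma)}^{1/n}$ for any polynomial sequence $p_n$ of degree $\le n$. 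To bring this to bear on the varying weight, approximate $\phi_n^{1/2}$ by polynomials $R_n$ of degree $o(n)$ with $\frac{1}{n}\log\!\bigl(R_n/\phi_n^{1/2}\bigr)\to 0$ uniformly on $\supp\sigma$ (built from a polynomial approximation of $-\phi$ followed by a discretization step). Applying the regularity inequality to $p_n=q_n R_n$ together with the principle of descent for $q_n$ yields
$$\liminf_{n\in\Lambda'} J_n \;\ge\; \exp\Bigl(-\sup_{x\in\supp\sigma}(V^\mu(x)+\phi(x))\Bigr).$$

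\emph{Identification and conclusion.} Matching this lower bound with $\limsup J_n\le e^{-w}$ forces $\sup_{\supp\sigma}(V^\mu+\phi)\ge w$. The matched $J_n$-asymptotic combined with standard weighted-energy estimates (Fekete-type lower bounds for the Vandermondian of the zeros of $q_n$) shows also that $V^\mu+\phi\le w$ quasi-everywhere on $\supp\mu$; the principle of domination then upgrades this to $V^\mu+\phi\le w$ on $\supp\mu$ and $V^\mu+\phi\ge w$ on $\supp\sigma$. By the uniqueness statement in the first lemma, $\mu=\lambda$. Since every subsequential limit is $\lambda$, the full sequence $\mu_{q_n}$ converges weakly to $\lambda$, and the sandwich gives $J_n\to e^{-w}$. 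The main obstacle is the lower bound: the weights $\phi_n$ are not polynomials, so the regularity inequality cannot be applied directly to $q_n\phi_n^{1/2}$, and the construction of polynomial surrogates $R_n\approx\phi_n^{1/2}$ with the correct $n$-th root asymptotic and degree $o(n)$, together with the final upgrade from the pointwise estimate to the full equilibrium characterization on all of $\supp\sigma$ (rather than just $\supp\mu$), is where the proof genuinely uses $\sigma\in\Reg$.
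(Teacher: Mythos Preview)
The paper does not actually prove this lemma: immediately before the statement it says ``As stated here, the proof appears in \cite[Lemma 4.2]{lago_ulises_sorokin}'' and gives no argument of its own. So there is no in-paper proof to compare your proposal against.

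That said, your outline is the standard Gonchar--Rakhmanov/Stahl--Totik route and is essentially the one carried out in the cited reference: upper bound from the $L^2$-minimality of $q_n$ tested against a discretization of $\lambda$, lower bound from $\sigma\in\Reg$ combined with the principle of descent, and identification of the subsequential limit $\mu$ with $\lambda$ via the uniqueness of the weighted equilibrium. Two places where your sketch is a bit loose: (i) the polynomial surrogate $R_n\approx\phi_n^{1/2}$ is usually built by first approximating the \emph{continuous} limit $e^{-\phi}$ uniformly on $\supp\sigma$ by a fixed polynomial $g$ and then taking $g^{\lfloor\varepsilon n\rfloor}$, giving degree $O(\varepsilon n)$ and letting $\varepsilon\downarrow 0$ at the end, rather than a genuine $o(n)$ construction in one shot; (ii) the ``upgrade'' at the end---turning the inequality $V^\mu+\phi\le w$ from quasi-everywhere on $\supp\mu$ into the full two-sided equilibrium characterization on $\supp\sigma$---uses the regularity of the compact set $\supp\sigma$ (so that capacity-zero exceptional sets can be removed), which is an explicit hypothesis here. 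With those two points tightened, your argument is the correct one.
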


\section{Proof of the main results and some consequences}

\subsection{Proof of Theorem \ref{teo:1}}
For the proof of this result we make use of a technique introduced in \cite{GRS} for the study of the weak asymptotic of type II multiple orthogonal polynomials associated with generalized Nikishin systems (see also \cite{FL, Lago_Sergio_Ulises, lago_ulises_sorokin}).

\begin{proof}
The unit ball in the cone of positive Borel measures is weak star compact; therefore, it is sufficient to show that each  sequence of measures $\left(\mu_{Q_{n,j}}\right)_{n \geq N}, j=1,\ldots,m$, has only one accumulation point which coincides with the corresponding component of the vector equilibrium measure $\vec{\lambda}$ determined by the matrix $\mathcal{C}_\mathcal{N}$ on the system of compact sets $\supp \sigma_j$, $j=1,\ldots,m$.

Let $\Lambda$ be a sequence indices such that for each $j=1,\ldots,m$
\begin{equation*}
    *\lim_{n\in\Lambda}\mu_{Q_{n,j}} = \mu_j.
\end{equation*}
Notice that $\mu_j\in\mathcal{M}_1(E_j)$, $j=1,\ldots,m$. Taking into account that all the zeros of $Q_{n,j}$ lie in $\Delta_j$, it follows that
\begin{equation}
    \lim_{n\in\Lambda}|Q_{n,j}(z)|^{1/n} = \exp\left(-V^{\mu_j}(z)\right), \label{limit_nesimo_Qnj}
\end{equation}
uniformly on compact subsets of $\C\diff\Delta_j$.

The generating measures $\sigma_j, j=1,\ldots,m,$ have constant sign. Without loss of generality we can assume that they are positive.
Notice that  $\mathcal{A}_{n,m}=\pm Q_{n,m}T_n$, where $T_n\rightrightarrows T$ on compact subsets of $\mathbb{C}$ (recall that $a_{n,m}$ is monic). Hence, formula \ref{Orth_H_nj}, when $j=m-1$ becomes
\begin{equation*}
    \int x^\nu Q_{n,m}(x)\frac{T_n(x)\D \sigma_m(x)}{Q_{n,m-1}(x)},\qquad \nu=0,1,\ldots,n-D-1.
\end{equation*}
In order to use Lemma \ref{logarith_asymptotic}, write $\phi_n=T_n/Q_{n,m-1}$.  Then,
\begin{equation*}
    \lim_{n\in\Lambda}\frac{1}{2n}\log \phi_n(x) = \lim_{n\in\Lambda}\left(\frac{1}{2n}\log Q_{n,m-1} - \frac{1}{2n}\log T_n\right).
\end{equation*}
As $T_n\rightrightarrows T$ on $\supp\sigma_m$, where the polynomial $T$ has no zeros, we conclude that $0<b\leq|T_n|\leq B$, and $\frac{1}{2n}\log T_n \rightrightarrows 0$ uniformly on $\supp\sigma_m$. According to (\ref{limit_nesimo_Qnj}) we get
\begin{equation*}
    \lim_{n\in\Lambda}\frac{1}{2n}\log |Q_{n,m-1}(x)| = -\frac{1}{2}V^{\mu_{m-1}}(x),
\end{equation*}
uniformly on $\supp \sigma_m$. So,
\begin{equation*}
    \lim_{n\in\Lambda}\frac{1}{2n}\log \phi_n(x) = -\frac{1}{2}V^{\mu_{m-1}}(x)>-\infty.
\end{equation*}
Thus, from Lemma \ref{logarith_asymptotic} we deduce that $\mu_m$ is the unique solution of the extremal problem
\begin{equation}
    V^{\mu_m}(x) - \frac{1}{2}V^{\mu_{m-1}}(x)
    \begin{cases}
       = w_m, & x\in\supp(\mu_m),\\
    \geq w_m, & x\in \supp (\sigma_m),
    \end{cases} \label{extremal_Qnm}
\end{equation}
and
\begin{equation}
    \lim_{n\in\Lambda}\left(\int\frac{Q^2_{n,m}(x)}{|Q_{n,m-1}(x)|}\D\sigma_m(x)\right)^{1/2n} = e^{w_m}. \label{log_asymp_Qnm}
\end{equation}

Next, we prove by induction on decreasing values of $j$, that for all $j=1,\ldots,m$
\begin{equation}
    V^{\mu_j}(x)-\frac{1}{2}V^{\mu_{j-1}}(x)-\frac{1}{2}V^{\mu_{j+1}}(x)+w_{j+1}
    \begin{cases}
    =w_j, & x\in\supp \mu_j,\\
    \geq w_j, & x\in \supp \sigma_j,
    \end{cases}\label{extremal_Qnj}
\end{equation}
where $V^{\mu_0}\equiv V^{\mu_{m+1}}\equiv 0$, $w_{m+1}=0$, and
\begin{equation}
    \lim_{n\in\Lambda} \left(\int Q^2_{n,j}(x)\frac{|\mathcal{H}_{n,j}(x)|\D\sigma_j(x)}{|Q_{n,j-1}(x)Q_{n,m+1}(x)|}\right)^{1/2n}=e^{-w_j} \label{limit_norm2_Qnj}
\end{equation}
where $Q_{n,0}\equiv Q_{n,m+1}\equiv 1$.

Notice that for $j=m$ these relations are (\ref{log_asymp_Qnm}) and (\ref{extremal_Qnm}), and the initial step of the induction is settled. Suppose that the statement is true for $j+1\in\{3,\ldots,m\}$ and let us prove it for $j$. The step from $j=2$ to $j=1$ will be treated separately afterwards.

For $j=1,\ldots,m$ the orthogonality relations (\ref{Orth_H_nj}) can be expressed as
\begin{equation}
\label{Orth_H_nj_rewritten}
    \int x^\nu Q_{n,j}(x)\frac{\mathcal{H}_{n,j}(x)\D\sigma_{j}(x)}{Q_{n,j-1}(x)Q_{n,j+1}(x)}=0, \qquad \nu=0,1,\ldots,n-D-1,
\end{equation}
and using (\ref{H_mjIdentity_1}), $j=2,\ldots,m$
\begin{equation*}
    \int x^\nu Q_{n,j}(x)\left(\int \frac{Q^2_{n,j+1}(t)}{x-t}\frac{\mathcal{H}_{n,j+1}(t)\D\sigma_{j+1}(t)}{Q_{n,j}(t)Q_{n,j+2}(t)}\right)\frac{\D\sigma_{j}(x)}{Q_{n,j-1}(x)Q_{n,j+1}(x)} = 0,
\end{equation*}
for $\nu=0,1,\ldots,n-D-1$.

The limit in (\ref{limit_nesimo_Qnj}) gives us that
\begin{equation*}
    \lim_{n\in\Lambda}\frac{1}{2n}\log |Q_{n,j-1}(x)Q_{n,j+1}(x)| = -\frac{1}{2}V^{\mu_{j-1}}(x)-\frac{1}{2}V^{\mu_{j+1}}(x),\label{limit_log_Qnj-1Qnj+1}
\end{equation*}
uniformly on $\Delta_j$.

Set
\begin{equation*}
    K_{n,j+1}:=\left(\int Q^2_{n,j+1}(t)\frac{|\mathcal{H}_{n,j+1}(t)|\D\sigma_{j+1}(t)}{|Q_{n,j}(t)Q_{n,j+2}(t)|}\right)^{-1/2}.
\end{equation*}
It follows that for $x\in\Delta_j$
\begin{equation*}
    \frac{1}{\delta^*_{j+1}K^2_{n,j+1}} \leq \int \frac{Q^2_{n,j+1}(t)}{|x-t|} \frac{|\mathcal{H}_{n,j+1}(t)|\D\sigma_{j+1}(t)}{|Q_{n,j}(t)Q_{n,j+2}(t)|} \leq \frac{1}{\delta_{j+1}K^2_{n,j+1}}
\end{equation*}
where $0<\delta_{j+1}=\inf\{|x-t|:t\in\Delta_{j+1}, x\in\Delta_j\}\leq \max\{|x-t|:t\in\Delta_{j+1}, x\in\Delta_j\}=\delta^*_{j+1}<\infty$. Taking into consideration these inequalities, from the induction hypothesis, we obtain that
\begin{equation}
    \lim_{n\in\Lambda}\left(\int \frac{Q^2_{n,j+1}(t)}{|x-t|}\frac{|\mathcal{H}_{n,j+1}(t)|\D\sigma_{j+1}(t)}{|Q_{n,j}(t)Q_{n,j+2}(t)|}\right)^{1/2n} = e^{-w_{j+1}}. \label{limit_exp_wj+1}
\end{equation}
Taking (\ref{limit_log_Qnj-1Qnj+1}) and (\ref{limit_exp_wj+1}) into account, Lemma \ref{logarith_asymptotic} yields that $\mu_j$ is the unique solution of the extremal problem (\ref{extremal_Qnj}) and
\begin{equation*}
    \lim_{n\in\Lambda}\left(\iint \frac{Q^2_{n,j+1}(t)}{|x-t|}\frac{|\mathcal{H}_{n,j+1}(t)|\D\sigma_{j+1}(t)}{|Q_{n,j}(t)Q_{n,j+1}(t)|}\frac{Q^2_{n,j}(x)\D\sigma_j(x)}{|Q_{n,j-1}(x)Q_{n,j+1}(x)|}\right)^{1/2n} = e^{-w_j}.
\end{equation*}
As a consequence of (\ref{H_mjIdentity_1}), $j=2,\ldots,m-1$, the above formula reduces to (\ref{limit_norm2_Qnj}).

For $j=1$ formula (\ref{Orth_H_nj_rewritten}) becomes
\begin{equation*}
    \int x^\nu Q_{n,1}(x)\left(T(x)\int \frac{Q^2_{n,2}(t)}{x-t}\frac{\mathcal{H}_{n,j}(t)\D\sigma_2(t)}{Q_{n,1}(t)Q_{n,3}(t)}\right)\frac{\D\sigma_1(x)}{Q_{n,2}(x)} = 0,\; \nu = 0,\ldots,n-D-1.
\end{equation*}
From (\ref{limit_nesimo_Qnj}) we have  $\lim_{n\in\Lambda}\frac{1}{2n}\log|Q_{n,2}(x)| = -\frac{1}{2}V^{\mu_2}(x)$ uniformly on $\Delta_1$. Recall that $0<b\leq T(x)\leq B$ in $\Delta_1$, thus it follows that for $x\in\Delta_1$:
\begin{equation*}
    \frac{b}{\delta^*_2K^2_{n,2}}\leq |T(x)|\int \frac{Q^2_{n,2}(t)}{|x-t|}\frac{|\mathcal{H}_{n,j}(t)|\D\sigma_2(t)}{|Q_{n,1}(t)Q_{n,3}(t)|} \leq \frac{B}{\delta_2K^2_{n,2}}.
\end{equation*}
From here on, all the arguments used before work as well and the induction process is completed.

We can rewrite (\ref{extremal_Qnj}) as
\begin{equation}\label{vecequil2}
    V^{\mu_j}(x)-\frac{1}{2}V^{\mu_{j-1}}-\frac{1}{2}V^{\mu_{j+1}}(x)
    \begin{cases}
       = w_j', & x\in \supp\mu_j,\\
    \geq w_j', & x\in \supp \sigma_j,
    \end{cases}
\end{equation}
for $j=1,\ldots,m$, where
\begin{equation}
    w_j' = w_j-w_{j+1}, \qquad w_{m+1}=0. \label{relations_ws}
\end{equation}
(Recall that the terms with $V^{\mu_0}$ and $V^{\mu_{m+1}}$ do not appear when $j=0$ and $j=m$, respectively).
Now, \eqref{vecequil2} adopts the form of $\eqref{vecequil}$ which has only one solution. If follows that   $\vec{\lambda} = (\mu_1,\ldots,\mu_m)$ is the equilibrium solution for the vector potential problem determined by the interactions matrix $\mathcal{C}_{\mathcal{N}}$ on the system of compact sets $\supp \sigma_j$, $j=1,\ldots,m$ and $\omega^{\vec{\lambda}} = (w_1', \ldots, w_m')$ is the corresponding vector equilibrium constant. This is for any convergent subsequence; since the equilibrium problem does not depend on  $\Lambda$ and the solution is unique we obtain  (\ref{weak_Qnj}).

From the uniqueness of the vector equilibrium constant and (\ref{limit_norm2_Qnj}), we get
\begin{equation*}
    \lim_{n\to \infty}\left(\int Q^2_{n,j}(x) \frac{|\mathcal{H}_{n,j}(x)|\D\sigma_j(x)}{|Q_{n,j-1}(x)Q_{n,j+1}(x)|}\right)^{1/2n} = e^{-w_j},
\end{equation*}
On the other hand, from (\ref{relations_ws}) it follows that $w_m=\omega_m^{\vec{\lambda}}$ when $j=m$. Suppose that $w_{j+1}=\sum_{k=j+1}^m \omega^{\vec{\lambda}}_k$ where $j+1\in\{2,\ldots, m\}$. Then, according to (\ref{relations_ws})
\begin{equation*}
    w_j = w_j'+ w_{j+1} = \omega^{\vec{\lambda}}_j+w_{j+1} = \sum_{k=j}^m \omega^{\vec{\lambda}}_k
\end{equation*}
and \eqref{limit_nesimo}  immediately follows.
\end{proof}

\subsection{Proof of Theorem \ref{logarithmic_asymptotic_Anj}}

\begin{proof}
Since $\mathcal{A}_{n,m} = \pm Q_{n,m}T_n$ and $T_n\rightrightarrows T$ on compact subsets of $\C$,  (\ref{weak_Qnj}) implies
\begin{equation*}
    \lim_{n\to \infty}|\mathcal{A}_{n,m}(z)|^{1/n} = \exp\left(-V^{\lambda_m}\right), \qquad z\in \mathcal{K}\subset\C\diff(\Delta_m\cup Z).
\end{equation*}
For $j=1,\ldots,m-1$, from (\ref{H_mjIdentity_1}) we have
\begin{equation}
    \mathcal{A}_{n,j}(z) = \frac{Q_{n,j}(z)}{Q_{n,j+1}(z)}\int \frac{Q^2_{n,j+1}(x)}{z-x}\frac{\mathcal{H}_{n,j+1}(x)\D\sigma_{j+1}(x)}{Q_{n,j}(x)Q_{n,j+2}(x)}, \label{Anj_Integral}
\end{equation}
where $Q_{n,0}\equiv Q_{n,m+1}\equiv 1$. Now, (\ref{weak_Qnj}) implies
\begin{equation*}
    \lim_{n\to \infty} \left|\frac{Q_{n,j}(z)}{Q_{n,j+1}(z)}\right|^{1/n} = \exp\left(V^{\lambda_{j+1}}(z)-V^{\lambda_j}(z)\right),\qquad \mathcal{K}\subset\C\diff(\Delta_j\cup \Delta_{j+1})
\end{equation*}
(we also use that the zeros of $Q_{n,j}$ and $Q_{n,j+1}$ lie in $\Delta_j$ and $\Delta_{j+1}$, respectively). It remains to find the $n$-th root asymptotic behavior of the integral.

Fix a compact set $\mathcal{K}\subset\C\diff\Delta_{j+1}$. It is easy to verify that
\begin{equation}
    \frac{C_1}{K_{n,j+1}^2}\leq \left|\int \frac{Q^2_{n,j+1}(x)}{z-x}\frac{\mathcal{H}_{n,j+1}(x)\D\sigma_{j+1}(x)}{Q_{n,j}(x)Q_{n,j+1}(x)}\right| \leq \frac{C_2}{K_{n,j+1}^2}, \label{bounds_integral}
\end{equation}
where
\begin{equation*}
    C_1 = \frac{\min\{\max\{|u-x|, |v|: z=u+iv\}: z\in\mathcal{K}, x\in \Delta_{j+1}\}}{\max\{|z-x|^2: z\in\mathcal{K}, x\in\Delta_{j+1}\}}
\end{equation*}
and
\begin{equation*}
    C_2 = \frac{1}{\min\{|z-x|:z\in\mathcal{K}, x\in\Delta_{j+1}\}}<\infty.
\end{equation*}
Taking into account (\ref{limit_nesimo}) we get
\begin{equation}
    \lim_{n\to \infty} \left|\int \frac{Q^2_{n,j+1}(x)}{z-x}\frac{\mathcal{H}_{n,j+1}(x)\D\sigma_{j+1}(x)}{Q_{n,j}(x)Q_{n,j+2}(x)}\right|^{1/n} = \exp\left(-2\sum_{k=j+1}^m \omega^{\vec{\lambda}}_k\right). \label{limit_integral}
\end{equation}
From (\ref{Anj_Integral})-(\ref{limit_integral}) we deduce (\ref{limit_nrooth_Anj}). Finally, notice that
\begin{equation*}
     \lim_{n\to \infty} |T(z)\mathcal{A}_{n,0}(z)|^{1/n} = \lim_{n\to \infty} |\mathcal{A}_{n,0}(z)|^{1/n}
\end{equation*}
for all $z\in\C\diff(\Delta_1\cup Z)$, in case that the first limit exists. This last statement holds, and its proof follows easily using the same arguments as above.
\end{proof}

\subsection{Consequences}

Let us find the logaritmic asymptotic of the polynomials $a_{n,j}, j=0,\ldots,m$.

\begin{corollary} Under the assumnptions of Theorem \ref{teo:1},
\begin{equation}
    \lim_{n\to \infty} |a_{n,j}(z)|^{1/n} = A_m(z), \qquad j=1,\ldots,m,\label{limit_nrooth_anj}
\end{equation}
uniformly on compact subsets of $\C\diff(\Delta_m\cup Z)$.
\end{corollary}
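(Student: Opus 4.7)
The plan is to combine the ratio asymptotics in \eqref{convergencia} with the known $n$th-root limit of $|a_{n,m}|$ coming from Theorem \ref{logarithmic_asymptotic_Anj}. Since $\mathcal{A}_{n,m} = (-1)^m a_{n,m}$, Theorem \ref{logarithmic_asymptotic_Anj} immediately yields
\[
|a_{n,m}(z)|^{1/n} \;=\; |\mathcal{A}_{n,m}(z)|^{1/n} \;\longrightarrow\; \exp\bigl(-V^{\lambda_m}(z)\bigr) = A_m(z)
\]
uniformly on compact subsets of $\C\diff(\Delta_m\cup Z)$, which settles the case $j=m$.

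For $j\in\{1,\ldots,m-1\}$ I would write $a_{n,j} = a_{n,m}\cdot (a_{n,j}/a_{n,m})$ and take absolute values raised to $1/n$. By \eqref{convergencia}, the ratio $a_{n,j}/a_{n,m}$ converges uniformly on every compact $K\subset \C\diff(\Delta_m\cup Z)$ to $\widehat{s}_{m,j+1}$. The conclusion \eqref{limit_nrooth_anj} will then follow once one checks that $|a_{n,j}/a_{n,m}|^{1/n}\to 1$ uniformly on $K$; this in turn reduces to verifying that, for $n$ large, $|a_{n,j}/a_{n,m}|$ is bounded above and below by positive constants on $K$. The upper bound is automatic from uniform convergence to a continuous function; the lower bound will follow provided $\widehat{s}_{m,j+1}$ has no zeros on $\C\diff\Delta_m$.

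The only technically delicate point is this nonvanishing of $\widehat{s}_{m,j+1}$. Unfolding one step of the nested product gives $ds_{m,j+1}(x) = \widehat{s}_{m-1,j+1}(x)\,d\sigma_m(x)$, and induction on the length of the chain shows that each $s_{k,j+1}$, $j<k\le m$, is, up to a global sign, a positive Borel measure supported on $\Delta_k$: indeed each factor in the nested product is a Cauchy transform evaluated off the support of its own measure, and such a transform has constant sign. In particular $\widehat{s}_{m-1,j+1}$, evaluated on the interval $\Delta_m$ (disjoint from $\Delta_{m-1}$), has constant sign, so $s_{m,j+1}$ has constant sign on $\Delta_m$, and hence its Cauchy transform does not vanish on $\C\diff\Delta_m$. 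On any compact $K\subset\C\diff(\Delta_m\cup Z)$, $|\widehat{s}_{m,j+1}|$ therefore attains a strictly positive minimum, which supplies the required uniform lower bound for $|a_{n,j}/a_{n,m}|$; taking $n$th roots and multiplying by the limit of $|a_{n,m}|^{1/n}$ then yields \eqref{limit_nrooth_anj}.
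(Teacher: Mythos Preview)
Your proof is correct and follows essentially the same approach as the paper: both establish the case $j=m$ from Theorem~\ref{logarithmic_asymptotic_Anj}, then for $j<m$ use \eqref{convergencia} together with the nonvanishing of $\widehat{s}_{m,j+1}$ on $\C\diff\Delta_m$ to conclude. The paper simply asserts this nonvanishing as a known fact about Cauchy transforms of measures of constant sign, whereas you spell out the inductive argument; otherwise the arguments coincide.
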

\begin{proof}
In the proof of Theorem \ref{logarithmic_asymptotic_Anj}  we  obtained (\ref{limit_nrooth_anj}) for $j=m$. Now, recall that the function $\widehat{s}_{m,j+1}$ never equals zero in $\C\diff(\Delta_m\cup Z)$; therefore, for the remaining values of $j$, the limit (\ref{limit_nrooth_anj}) is an immediate consequence of (\ref{limit_nrooth_anj}) for $j=m$ and (\ref{convergencia}).
\end{proof}

Regarding \eqref{limit_nrooth_anj} for $j=0$, aside from $Z$ we would have to exclude  from $\C\diff \Delta_m$ all the points where $f=0$.

Our next goal is to  produce estimates of the rate of convergence in (\ref{convergencia}). First we prove

\begin{corollary}
\label{corollary_logarithmic_asym}
Under the assumptions of Theorem \ref{teo:1}, for $k=1,\ldots,m$ and $j=0,\ldots,k-1$, we have
\begin{multline}
    \limsup_{n\to \infty} \left|\frac{\mathcal{A}_{n,j}(z)}{\mathcal{A}_{n,k}(z)}\right|^{1/n} \leq \\ \exp\left(-V^{\lambda_{k+1}}(z)+V^{\lambda_{k}}(z)+V^{\lambda_{j+1}}(z)-V^{\lambda_{j}}(z)-2\sum_{\ell=j+1}^k \omega_\ell^{\vec{\lambda}}\right), \label{inequality_nrooth_ratio_Anj}
\end{multline}
uniformly on compact subsets $\mathcal{K}\subset\C\diff(\Delta_k\cup\Delta_{j+1})$, and
\begin{multline}
    \lim_{n\to \infty} \left|\frac{\mathcal{A}_{n,j}(z)}{\mathcal{A}_{n,k}(z)}\right|^{1/n} =\\
    \exp\left(-V^{\lambda_{k+1}}(z)+V^{\lambda_{k}}(z)+V^{\lambda_{j+1}}(z)-V^{\lambda_{j}}(z)-2\sum_{\ell=j+1}^k \omega_\ell^{\vec{\lambda}}\right), \label{limit_nrooth_ratio_Anj}
\end{multline}
uniformly on compact subsets of $\mathcal{K}\subset\C\diff(\Delta_j\cup\Delta_{j+1}\cup\Delta_k\cup\Delta_{k+1})$. If $j=0$ or $k=m$ we must also delete from $\C$ the zeros of $T$ in order that \eqref{inequality_nrooth_ratio_Anj} and \eqref{limit_nrooth_ratio_Anj} remain valid. For $k=1,\ldots,m$
\begin{equation}
    -V^{\lambda_{k+1}}(z)+2V^{\lambda_{k}}(z)-V^{\lambda_{k-1}}(z)-2 \omega_k^{\vec{\lambda}}<0, \qquad z\in\C\diff\Delta_k \label{potentials_inequality_0}
\end{equation}
(by convention, $V^{\lambda_0}\equiv V^{\lambda_{m+1}}\equiv 0$). If $k>j+1$
\begin{equation}
    -V^{\lambda_{k+1}}(z)+V^{\lambda_{k}}(z)+V^{\lambda_{j+1}}(z)-V^{\lambda_{j}}(z)-2\sum_{\ell=j+1}^k \omega_\ell^{\vec{\lambda}}<0, \qquad z\in \C,\label{inequality_sumpotentials}
\end{equation}
which implies that the sequence $\{\mathcal{A}_{n,j}/\mathcal{A}_{n,k}\}$ converges to zero with geometric rate on each compact subset of $\C\diff(\Delta_k\cup\Delta_{j+1})$ ($\C\diff(\Delta_k\cup\Delta_{j+1} \cup Z)$ if $k=m$ or $j=0$).
\end{corollary}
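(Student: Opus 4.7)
The plan is to derive the four assertions from Theorem \ref{logarithmic_asymptotic_Anj} together with classical potential-theoretic tools: the principle of descent, normal families of harmonic functions, and the maximum principle applied to the vector equilibrium \eqref{vecequil}.

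First, the identity \eqref{limit_nrooth_ratio_Anj} is obtained by dividing the two limits furnished by Theorem \ref{logarithmic_asymptotic_Anj}: on $\C\diff(\Delta_j\cup\Delta_{j+1})$ one has $|\mathcal{A}_{n,j}|^{1/n}\to A_j$, and analogously for $\mathcal{A}_{n,k}$ on $\C\diff(\Delta_k\cup\Delta_{k+1})$. Using $\sum_{\ell=j+1}^m\omega_\ell^{\vec{\lambda}} - \sum_{\ell=k+1}^m\omega_\ell^{\vec{\lambda}}=\sum_{\ell=j+1}^k\omega_\ell^{\vec{\lambda}}$, the quotient produces the stated limit on the intersection. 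The endpoint cases $j=0$ and $k=m$ inherit the extra exclusion of $Z$ from their counterparts in Theorem \ref{logarithmic_asymptotic_Anj}.

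For the upper bound \eqref{inequality_nrooth_ratio_Anj} on the larger set, which now may contain $\Delta_j$ where $\mathcal{A}_{n,j}$ has zeros, I would use the decomposition
\[
\frac{\mathcal{A}_{n,j}}{\mathcal{A}_{n,k}}=\frac{Q_{n,j}}{Q_{n,k}}\cdot\frac{\mathcal{A}_{n,j}/Q_{n,j}}{\mathcal{A}_{n,k}/Q_{n,k}}.
\]
Because $\mathcal{A}_{n,j}/Q_{n,j}$ is analytic and non-vanishing on $\C\diff\Delta_{j+1}$, the sequence $\tfrac{1}{n}\log|\mathcal{A}_{n,j}/Q_{n,j}|$ is a normal family of harmonic functions there; combined with its known convergence on the subset $\C\diff(\Delta_j\cup\Delta_{j+1})$, a Harnack/Montel-type extension propagates the limit $V^{\lambda_{j+1}}-2\sum_{\ell=j+1}^m\omega_\ell^{\vec{\lambda}}$ to all of $\C\diff\Delta_{j+1}$, and similarly for the $k$-factor on $\C\diff\Delta_{k+1}$. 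The polynomial ratio is controlled via the principle of descent: the weak convergence $\mu_{Q_{n,j}}\to\lambda_j$ of Theorem \ref{teo:1} together with upper semicontinuity of $\log|z-x|$ in $x$ gives $\limsup_n|Q_{n,j}(z)|^{1/n}\leq\exp(-V^{\lambda_j}(z))$ throughout $\C$, with equality on $\C\diff\Delta_j$. Multiplying the four bounds yields \eqref{inequality_nrooth_ratio_Anj}.

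The strict inequalities \eqref{potentials_inequality_0} and \eqref{inequality_sumpotentials} come from the vector equilibrium. The left-hand side of \eqref{potentials_inequality_0} equals $-2G_k(z)$ with $G_k:=V^{\lambda_k}-\tfrac{1}{2}V^{\lambda_{k-1}}-\tfrac{1}{2}V^{\lambda_{k+1}}-\omega_k^{\vec{\lambda}}$; by \eqref{vecequil} together with the companion inequality $G_k\geq 0$ on $\Delta_k$, which is the ``$\geq w_j$ on $\supp\sigma_j$'' half extracted from \eqref{extremal_Qnj} in the proof of Theorem \ref{teo:1}, $G_k$ vanishes on $\supp\lambda_k$ and is non-negative on $\Delta_k$, while $G_k(z)\to-\omega_k^{\vec{\lambda}}$ as $z\to\infty$. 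A maximum principle argument, exploiting continuity of the potentials and harmonicity of $G_k$ off $\Delta_{k-1}\cup\Delta_k\cup\Delta_{k+1}$, then forces $-2G_k<0$ on $\C\diff\Delta_k$. Inequality \eqref{inequality_sumpotentials} follows by adding the $G_\ell$ identities for $\ell=j+1,\ldots,k$: the $\tfrac{1}{2}$-$1$-$\tfrac{1}{2}$ coefficient pattern telescopes to produce the left-hand side of \eqref{inequality_sumpotentials}, which is non-positive on each $\supp\lambda_\ell$ and tends to $-2\sum_{\ell=j+1}^k\omega_\ell^{\vec{\lambda}}$ at infinity, so the maximum principle yields strict negativity on all of $\C$. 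The main technical hurdle is precisely this strictness, which requires the strong maximum principle and the fact that the vector equilibrium constants are strictly positive for the Nikishin interaction matrix $\mathcal{C}_{\mathcal{N}}$.
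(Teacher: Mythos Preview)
Your derivation of \eqref{limit_nrooth_ratio_Anj} by dividing the two limits from Theorem~\ref{logarithmic_asymptotic_Anj} is fine and is essentially what the paper does (it just works one level lower, with the integral representation \eqref{Anj_Integral} from which those limits were obtained). The substantive differences, and the gaps, are in the other three parts.

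For \eqref{inequality_nrooth_ratio_Anj} the ``Harnack/Montel-type extension'' is where your argument fails. Convergence of a sequence of harmonic functions on an open subset of a connected domain does not, by itself, propagate to the full domain; you need an a priori local bound on the larger set, and you give none. To bound $\tfrac{1}{n}\log|\mathcal{A}_{n,j}/Q_{n,j}|$ locally uniformly on $\C\diff\Delta_{j+1}$ you would have to return to \eqref{H_mjIdentity_1} and use the two-sided estimate \eqref{bounds_integral}, and that is precisely the paper's route: it writes $\mathcal{A}_{n,j}/\mathcal{A}_{n,k}$ via \eqref{ratio_Anj_integralrep*} as $(Q_{n,j}Q_{n,k+1})/(Q_{n,j+1}Q_{n,k})$ times a ratio of the integrals appearing in \eqref{H_mjIdentity_1}, applies the principle of descent only to the numerator polynomials $Q_{n,j}Q_{n,k+1}$, uses the exact limits for $Q_{n,j+1}Q_{n,k}$ off $\Delta_{j+1}\cup\Delta_k$, and bounds the integral ratio with \eqref{bounds_integral} and \eqref{limit_nesimo}. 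Your normal-family detour is not a genuine shortcut; it hides the very estimate it claims to replace.

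For the potential inequalities there is first a sign slip: the left side of \eqref{potentials_inequality_0} equals $2G_k$, not $-2G_k$. More importantly, your path to strictness in \eqref{inequality_sumpotentials} rests on $\omega_\ell^{\vec\lambda}>0$, which is neither stated nor proved in the paper. The paper's argument avoids this entirely and is cleaner: it telescopes the left side of \eqref{inequality_sumpotentials} as
\[
\sum_{\ell=j+1}^{k}\Bigl(-V^{\lambda_{\ell+1}}(z)+2V^{\lambda_\ell}(z)-V^{\lambda_{\ell-1}}(z)-2\omega_\ell^{\vec\lambda}\Bigr),
\]
notes that each summand is $\le 0$ on all of $\C$ and, by \eqref{potentials_inequality_0}, strictly negative off $\Delta_\ell$. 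Since $k>j+1$ there are at least two summands with consecutive indices $\ell,\ell+1$, and $\Delta_\ell\cap\Delta_{\ell+1}=\emptyset$ forces at least one of them to be strictly negative at every $z\in\C$. No information on the sign of the equilibrium constants is needed.
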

\begin{proof}
Fix $k\in\{1,\ldots,m-1\}$ and $j\in\{1,\ldots,k-1\}$.  Using (\ref{Anj_Integral}) we get
\begin{equation}
    \frac{\mathcal{A}_{n,j}(z)}{\mathcal{A}_{n,k}(z)} = \frac{Q_{n,j}(z)Q_{n,k+1}(z)}{Q_{n,j+1}(z)Q_{n,k}(z)} \frac{\int \frac{Q^2_{m,j+1}(z)}{z-x}\frac{\mathcal{H}_{n,j}(x)\D\sigma_{j+1}(z)}{Q_{n,j}(z)Q_{n,j+2}(z)}}{\int \frac{Q^2_{m,k+1}(z)}{z-x}\frac{\mathcal{H}_{n,k}(x)\D\sigma_{k+1}(z)}{Q_{n,k}(z)Q_{n,k+2}(z)}} \label{ratio_Anj_integralrep*}
\end{equation}
From (\ref{weak_Qnj}) it follows that uniformly on each compact subset $\mathcal{K}\subset\C\diff(\Delta_j\cup\Delta_{j+1}\cup\Delta_k\cup\Delta_{k+1})$ we have
\begin{equation*}
    \lim_{n\to \infty} \left|\frac{Q_{n,j}(z)Q_{n,k+1}(z)}{Q_{n,j+1}(z)Q_{n,k}(z)}\right|^{1/n} = \exp\left(-V^{\lambda_{k+1}}(z)+V^{\lambda_k}(z)+V^{\lambda_{j+1}}(z)-V^{\lambda_j}\right),
\end{equation*}
and taking into account (\ref{limit_integral}), from (\ref{ratio_Anj_integralrep*}) we deduce (\ref{limit_nrooth_ratio_Anj}).

Now, from the principle of descent (see \cite[Appendix III]{Stahl_Totik}), locally uniformly on $\C$ we have
\begin{equation*}
    \limsup_{n\to \infty} |Q_{n,j}(z)Q_{n,k+1}|^{1/n} \leq \exp\left(-V^{\lambda_{k+1}}(z)-V^{\lambda_{j}}(z)\right).
\end{equation*}
Using the lower bound in (\ref{bounds_integral}) (with $j$ replaced by $k$) to estimate the integral in the denominator of (\ref{ratio_Anj_integralrep*}) from below and the previous remarks, (\ref{inequality_nrooth_ratio_Anj}) readily follows.

If $k=m$ and $j=1,\ldots,m-1$ in place of \eqref{ratio_Anj_integralrep*} we use the representation
\begin{equation*}
    \left|\frac{\mathcal{A}_{n,j}(z)}{\mathcal{A}_{n,m}(z)}\right| = \left|\frac{Q_{n,j}(z) }{Q_{n,j+1}(z)Q_{n,m}(z)T_n(z)} {\int \frac{Q^2_{m,j+1}(z)}{z-x}\frac{\mathcal{H}_{n,j}(x)\D\sigma_{j+1}(z)}{Q_{n,j}(z)Q_{n,j+2}(z)}}\right|,
\end{equation*}
where $T_n \rightrightarrows T$ and then argue as above. If $j=0$ and $k=1,\ldots,m$ the treatment is similar and is left to the reader.

According to (\ref{extremal_Qnj}), for $k=1,\ldots,m$ we have
\begin{equation}
    -V^{\lambda_{k+1}}(z)+2V^{\lambda_{k}}(z)-V^{\lambda_{k-1}}(z)-2 \omega_k^{\vec{\lambda}}=0,\qquad z\in\supp\lambda_k. \label{potentials_equality_0}
\end{equation}
Recall that all the measures $\lambda_k$ are probability, hence for each $k=2,\ldots,m-1$ the function $ -V^{\lambda_{k+1}}(z)+2V^{\lambda_{k}}(z)-V^{\lambda_{k-1}}(z)-2 \omega_k^{\vec{\lambda}}$ is harmonic at $z=\infty$, and is subharmonic in $\C\diff\supp\lambda_k$. Using maximum principle for subharmonic functions we obtain (\ref{potentials_inequality_0}).

When $k=1$, the left hand of (\ref{potentials_equality_0}) becomes $-V^{\lambda_2}(z)+2V^{\lambda_1}-2\omega_1^{\vec{\lambda}}$ which is subharmonic in $\C\diff\supp\lambda_1$ and also subharmonic at $\infty$ since
\begin{equation*}
    \lim_{n\to \infty} \left(-V^{\lambda_2}(z)+2V^{\lambda_1}-2\omega_1^{\vec{\lambda}}\right) = -\infty.
\end{equation*}
Therefore, we can also use the maximum principle to derive (\ref{potentials_inequality_0}). The case $k=m$ is completely analogous to the case $k=1$.

When $k>j+1$ we can write
\begin{multline*}
    -V^{\lambda_{k+1}}(z) + V^{\lambda_k}(z) + V^{\lambda_{j+1}}(z) - V^{\lambda_j}(z) - 2\sum_{\ell=j+1}^m\omega_k^{\vec{\lambda}} =\\
    \sum_{\ell=j+1}^k \left(-V^{\lambda_{\ell+1}}(z)+2V^{\lambda_\ell}(z)-V^{\lambda_{\ell-1}}(z)-2\omega_\ell^{\vec{\lambda}}\right),
\end{multline*}
and this sum contains at least two terms because $k>j+1$. Each term is less than or equal to zero in all $\C$ and so too the whole sum. To prove that it is strictly negative it is sufficient to show that at each point there is at least one negative term in the sum. Let us assume that there is a $z_0\in \C$ such that
\begin{equation*}
    -V^{\lambda_{\ell+1}}(z_0)+2V^{\lambda_\ell}(z_0)-V^{\lambda_{\ell-1}}(z_0)-2\omega_\ell^{\vec{\lambda}}=0, \qquad \ell=j+1,\ldots,k.
\end{equation*}
By what was proved above, this implies that $z_0\in\cap_{\ell=j+1}^k\Delta_\ell$. However, this is impossible because consecutive intervals in a Nikishin system are disjoint. From (\ref{inequality_nrooth_ratio_Anj}) and (\ref{inequality_sumpotentials}) the final statement is deduced.
\end{proof}

Using Corollary \ref{corollary_logarithmic_asym} we can recover the functions $\widehat{s}_{m-1,j+1}$, $j=1,\ldots,m-2$.
\begin{corollary}
\label{corallary_logarithmic_asym1}
Under the assumptions of Theorem \ref{logarithmic_asymptotic_Anj}, for each $j=1,\ldots,m-2$ we have
\begin{equation}
    \lim_{n\to \infty} \frac{(a_{n,j}-a_{n,m}\widehat{s}_{m,j+1})(z)}{(a_{n,m-1}-a_{n,m}\widehat{s}_{m,m})(z)} = \widehat{s}_{m-1,j+1}(z),\label{recover_measures}
\end{equation}
and
\begin{equation}
    \lim_n \frac{\left(a_{n,0}+\sum_{k=1}^{m}(-1)^k a_{n,k}r_k\right)-a_{n,m}\hat{s}_{m,1}}{(a_{n,m-1}-a_{n,m}\widehat{s}_{m,m})(z)}=\hat{s}_{m-1,1}.\label{recover_measures_0}
\end{equation}
uniformly on each compact subset of $\C \setminus \cup_{\ell=j+1}^m\Delta_\ell$.
\end{corollary}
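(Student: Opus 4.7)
The strategy is to express the numerator in \eqref{recover_measures} as an explicit combination of the forms $\mathcal{A}_{n,k}$ plus terms that vanish by a Nikishin cancellation identity, and then divide by the denominator and invoke Corollary~\ref{corollary_logarithmic_asym}. Set $E_{n,k}(z):=a_{n,k}(z)-a_{n,m}(z)\widehat{s}_{m,k+1}(z)$ for $k=1,\dots,m-1$; these are exactly the quantities whose ratio appears in \eqref{recover_measures}. The denominator is immediate: since the sum in $\mathcal{A}_{n,m-1}$ consists of the single term $(-1)^{m}a_{n,m}\widehat{s}_{m,m}$, one has $E_{n,m-1}=(-1)^{m-1}\mathcal{A}_{n,m-1}$.

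For $1\le j\le m-2$, substitute $a_{n,k}=E_{n,k}+a_{n,m}\widehat{s}_{m,k+1}$ into \eqref{Problem2} (with the convention $E_{n,m}:=0$) and regroup to obtain
\begin{equation*}
(-1)^{j}\mathcal{A}_{n,j}=E_{n,j}+\sum_{k=j+1}^{m-1}(-1)^{k-j}E_{n,k}\,\widehat{s}_{j+1,k}+a_{n,m}\,S_{j,m},
\end{equation*}
where
\begin{equation*}
S_{j,m}:=\widehat{s}_{m,j+1}+\sum_{k=j+1}^{m-1}(-1)^{k-j}\widehat{s}_{m,k+1}\,\widehat{s}_{j+1,k}+(-1)^{m-j}\widehat{s}_{j+1,m}.
\end{equation*}
The crux of the argument is the Nikishin cancellation identity $S_{j,m}\equiv 0$, which I expect to be the main technical obstacle. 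It follows by induction on $m-j$ from the two-measure identity $\widehat{\sigma}_{p}\widehat{\sigma}_{q}=\widehat{\langle\sigma_{p},\sigma_{q}\rangle}+\widehat{\langle\sigma_{q},\sigma_{p}\rangle}$ (a one-line consequence of Fubini and partial fractions, valid whenever $\supp\sigma_{p}\cap\supp\sigma_{q}=\emptyset$) together with the integral representation $\widehat{s}_{i,k}(z)=\int \widehat{s}_{i+1,k}(x)(z-x)^{-1}\,\D\sigma_{i}(x)$: expanding a product $\widehat{s}_{m,k+1}\widehat{s}_{j+1,k}$ in two ways and telescoping makes everything cancel.

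Granting the identity, divide by $E_{n,m-1}$ to obtain
\begin{equation*}
\frac{E_{n,j}}{E_{n,m-1}}=(-1)^{j-m+1}\,\frac{\mathcal{A}_{n,j}}{\mathcal{A}_{n,m-1}}-\sum_{k=j+1}^{m-1}(-1)^{k-j}\,\frac{E_{n,k}}{E_{n,m-1}}\,\widehat{s}_{j+1,k}.
\end{equation*}
By Corollary~\ref{corollary_logarithmic_asym} the first term on the right converges to $0$ geometrically, uniformly on compact subsets of $\C\setminus\bigcup_{\ell=j+1}^{m}\Delta_{\ell}$ (via \eqref{inequality_sumpotentials} when $j<m-2$, and via \eqref{potentials_inequality_0} with $k=m-1$ when $j=m-2$). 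A downward induction on $j$ from $j=m-2$ then yields $\lim_{n}E_{n,j}/E_{n,m-1}=\widehat{s}_{m-1,j+1}$: the base case is immediate and produces $\widehat{\sigma}_{m-1}=\widehat{s}_{m-1,m-1}$; the inductive step gives the limit $-\sum_{k=j+1}^{m-2}(-1)^{k-j}\widehat{s}_{m-1,k+1}\widehat{s}_{j+1,k}-(-1)^{m-1-j}\widehat{s}_{j+1,m-1}$, which equals $\widehat{s}_{m-1,j+1}$ by a second application of the cancellation identity, this time $S_{j,m-1}\equiv 0$.

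For \eqref{recover_measures_0} the argument is identical. The numerator there equals $\mathcal{A}_{n,0}-\sum_{k=1}^{m}(-1)^{k}a_{n,k}\widehat{s}_{1,k}-a_{n,m}\widehat{s}_{m,1}$, because the contributions $(-1)^{k}a_{n,k}r_{k}$ in $\mathcal{A}_{n,0}$ cancel exactly those in the numerator. Substituting $a_{n,k}=E_{n,k}+a_{n,m}\widehat{s}_{m,k+1}$ and using $S_{0,m}\equiv 0$ produces the same recursive identity, and the induction goes through unchanged. The exclusion of $Z$ from the domain of convergence in \eqref{recover_measures_0} comes from the fact that the geometric decay of $\mathcal{A}_{n,0}/\mathcal{A}_{n,m-1}$ in Corollary~\ref{corollary_logarithmic_asym} is only available off $Z$.
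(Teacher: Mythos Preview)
Your argument is correct, but it takes a longer path than the paper's. The paper invokes directly the identity
\[
(-1)^{j}(a_{n,j}-a_{n,m}\widehat{s}_{m,j+1})
  \;=\;
\mathcal{A}_{n,j}+\sum_{k=j+1}^{m-1}(-1)^{k-j}\,\widehat{s}_{k,j+1}\,\mathcal{A}_{n,k},
\]
taken from \cite[Lemma~2.1]{Lago_Sergio_Jacek}; note the \emph{reversed} Nikishin coefficients $\widehat{s}_{k,j+1}$. Dividing by $\mathcal{A}_{n,m-1}$ and sending all ratios $\mathcal{A}_{n,k}/\mathcal{A}_{n,m-1}$ with $k\le m-2$ to zero (Corollary~\ref{corollary_logarithmic_asym}) leaves the single surviving term $(-1)^{m-1-j}\widehat{s}_{m-1,j+1}$ and the result is immediate, with no induction and no separate cancellation identity. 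Your identity $(-1)^{j}\mathcal{A}_{n,j}=E_{n,j}+\sum_{k=j+1}^{m-1}(-1)^{k-j}\widehat{s}_{j+1,k}E_{n,k}$ is precisely the inverse of the paper's (the two triangular systems relating $(\mathcal{A}_{n,k})$ and $(E_{n,k})$ are mutual inverses, with coefficients $\widehat{s}_{j+1,k}$ in one direction and $\widehat{s}_{k,j+1}$ in the other), so your downward induction together with the repeated use of $S_{j,m}=S_{j,m-1}=0$ is effectively reproving that inversion by hand. What your approach buys is self-containment: you do not need to import the lemma from \cite{Lago_Sergio_Jacek}, and you make the underlying Nikishin biorthogonality explicit. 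What the paper's approach buys is brevity and the avoidance of an induction whose limit must itself be identified via a second algebraic identity. Your remark about excluding $Z$ for \eqref{recover_measures_0} is well taken; the paper's statement is silent on this point, but its own proof, like yours, ultimately rests on $\mathcal{A}_{n,0}/\mathcal{A}_{n,m-1}\to 0$, which per Corollary~\ref{corollary_logarithmic_asym} is only guaranteed away from $Z$.
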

\begin{proof}

Direct computation or \cite[Lemma 2.1]{Lago_Sergio_Jacek} allows to deduce the formula
\begin{equation}
\label{identity_for_recover}
    \mathcal{A}_{n,j} + \sum_{k=j+1}^{m-1}(-1)^{k-j}\widehat{s}_{k,j+1}\mathcal{A}_{n,k} = (-1)^j(a_{n,j}-a_{n,m}\widehat{s}_{m,j+1}).
\end{equation}
The formula holds at all points where both sides are meaningful. Dividing by $\mathcal{A}_{n,m-1}$ we get
\begin{multline*}
    \frac{\mathcal{A}_{n,j}}{\mathcal{A}_{n,m-1}} + \sum_{k=j+1}^{m-2}(-1)^{k-j}\widehat{s}_{k,j+1}\frac{\mathcal{A}_{n,k}}{\mathcal{A}_{n,m-1}} + (-1)^{m-1-j}\widehat{s}_{m-1,j+1} =\\
    (-1)^{m-1+j}\frac{(a_{n,j}-a_{n,m}\widehat{s}_{m,j+1})(z)}{(a_{n,m-1}-a_{n,m}\widehat{s}_{m,m})(z)}.
\end{multline*}
In order to obtain (\ref{recover_measures}), it remains to take limit on both sides and make use of the fact that the ratios $\mathcal{A}_{n,k}/\mathcal{A}_{n,m-1}$ uniformly tend to zero on compact subsets of $\C\diff\cup_{\ell=j+1}^m\Delta_\ell$.

To prove (\ref{recover_measures_0}) instead of \eqref{identity_for_recover} we use the formula
\begin{equation}
\label{nth_root_An0}
    \mathcal{A}_{n,0} + \sum_{k=1}^{m-1} (-1)^k \hat{s}_{k,1}\mathcal{A}_{n,k} = \left(a_{n,0}+\sum_{k=1}^{m}(-1)^k a_{n,k}r_k\right)-a_{n,m}\hat{s}_{m,1},
\end{equation}
which is obtained similarly. Dividing by $\mathcal{A}_{n,m-1}$ and taking limit we complete the proof.
\end{proof}

We wish to mention that the convergence in (\ref{recover_measures})-\eqref{recover_measures_0} occurs with geometric rate, as a result of (\ref{inequality_nrooth_ratio_Anj}) and (\ref{inequality_sumpotentials}).

Using Corollary \ref{corallary_logarithmic_asym1} we can give explicit expressions for the exact rate of convergence of the limits (\ref{convergencia}).

\begin{theorem}
\label{convergence_speed}
Under the assumptions of Theorem \ref{teo:1}, for each $j=1,\ldots,m-1$:
\begin{equation}
    \label{geometric_speed}
    \lim_{n\to \infty} \left| \frac{a_{n,j}(z)}{a_{n,m}(z)}-\widehat{s}_{m,j+1}(z)  \right|^{1/n} = \exp\left(2V^{\lambda_m}(z)-V^{\lambda_{m-1}}(z)-2\omega_m^{\vec{\lambda}}\right)
\end{equation}
and
\begin{equation}
    \label{geometric_speed_an0}
    \limsup_{n\to \infty} \left| \frac{a_{n,0}(z)}{a_{n,m}(z)}-f(z) \right|^{1/n} \leq \exp\left(2V^{\lambda_m}(z)-V^{\lambda_{m-1}}(z)-2\omega_m^{\vec{\lambda}}\right)
\end{equation}
uniformly on each compact subset $\mathcal{K}\subset  \mathbb{C}\setminus  (\cup_{\ell=j+1}^{m}\Delta_{\ell}\cup Z)$.
\end{theorem}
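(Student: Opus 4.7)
The starting point is identity \eqref{identity_for_recover}. Dividing it by $a_{n,m}=(-1)^m\mathcal{A}_{n,m}$ yields, for $j=1,\ldots,m-1$,
\begin{equation*}
\frac{a_{n,j}(z)}{a_{n,m}(z)}-\widehat{s}_{m,j+1}(z)=(-1)^{j+m}\left[\frac{\mathcal{A}_{n,j}(z)}{\mathcal{A}_{n,m}(z)}+\sum_{k=j+1}^{m-1}(-1)^{k-j}\widehat{s}_{k,j+1}(z)\frac{\mathcal{A}_{n,k}(z)}{\mathcal{A}_{n,m}(z)}\right],
\end{equation*}
so on a compact set $\mathcal{K}\subset\mathbb{C}\setminus(\cup_{\ell=j+1}^{m}\Delta_\ell\cup Z)$ the task reduces to the $n$-th root asymptotic of the bracket. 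Corollary \ref{corollary_logarithmic_asym} (take its $k$ equal to $m$, and recall $V^{\lambda_{m+1}}\equiv 0$) gives
\begin{equation*}
\lim_{n\to\infty}\left|\frac{\mathcal{A}_{n,k}(z)}{\mathcal{A}_{n,m}(z)}\right|^{1/n}=\exp\phi_k(z),\qquad \phi_k(z):=V^{\lambda_m}(z)+V^{\lambda_{k+1}}(z)-V^{\lambda_k}(z)-2\sum_{\ell=k+1}^{m}\omega_\ell^{\vec\lambda},
\end{equation*}
and in particular $\phi_{m-1}(z)=2V^{\lambda_m}(z)-V^{\lambda_{m-1}}(z)-2\omega_m^{\vec\lambda}$, the exact exponent appearing in \eqref{geometric_speed}.

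The crux is the strict domination $\phi_k(z)<\phi_{m-1}(z)$ on $\mathcal{K}$ for every $k\in\{j,\ldots,m-2\}$. For $k\leq m-3$ this is \eqref{inequality_sumpotentials} applied with the roles $(k,j)\mapsto(m-1,k)$, while for $k=m-2$ it reduces to \eqref{potentials_inequality_0} at index $m-1$. I would then convert these strict, uniform exponential gaps into an $n$-th root asymptotic for the whole sum by the elementary principle: if $|a_n|^{1/n}\to A>0$ and $|b_n|^{1/n}\to B<A$ uniformly on a compactum with the gap $A-B$ uniformly positive, then $|a_n+b_n|^{1/n}\to A$ uniformly. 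The hypothesis $A>0$ is met by the dominant contribution $\widehat{s}_{m-1,j+1}(z)\mathcal{A}_{n,m-1}(z)/\mathcal{A}_{n,m}(z)$ because $\mathcal{K}\subset\mathbb{C}\setminus\Delta_{m-1}$ and the Cauchy transform of a constant-sign measure does not vanish off its support, so $|\widehat{s}_{m-1,j+1}(z)|^{1/n}\to 1$ uniformly on $\mathcal{K}$. Putting these ingredients together delivers \eqref{geometric_speed}.

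For \eqref{geometric_speed_an0} I would use \eqref{nth_root_An0} to rewrite
\begin{equation*}
a_{n,0}-a_{n,m}f=\left[\mathcal{A}_{n,0}+\sum_{k=1}^{m-1}(-1)^k\widehat{s}_{k,1}\mathcal{A}_{n,k}\right]-\sum_{k=1}^{m-1}(-1)^k r_k\bigl(a_{n,k}-a_{n,m}\widehat{s}_{m,k+1}\bigr).
\end{equation*}
Dividing by $a_{n,m}$ and taking $|\cdot|^{1/n}$: the first bracket over $\mathcal{A}_{n,m}$ is bounded in $n$-th root by $\exp\phi_{m-1}$ exactly as in the argument just given (with $j=0$, the zeros of $T$ being excluded via $Z$ to accommodate the poles of the $r_k$), and each summand of the second piece is controlled term-by-term by \eqref{geometric_speed}. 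Subadditivity of $\limsup$ for $n$-th roots then yields \eqref{geometric_speed_an0}; one obtains only an inequality here because leading-order cancellation between the two pieces, whose asymptotics involve the extrinsic rational data $r_k$, cannot be excluded a priori. The main technical obstacle throughout is the passage from the strict pointwise inequalities $\phi_k<\phi_{m-1}$ to the uniform $n$-th root behaviour of the full sum, which is precisely where the non-vanishing of $\widehat{s}_{m-1,j+1}$ on $\mathbb{C}\setminus\Delta_{m-1}$ is essential.
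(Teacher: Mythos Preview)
Your proposal is correct and follows essentially the same route as the paper: both arguments divide \eqref{identity_for_recover} (respectively \eqref{nth_root_An0}) by $\mathcal{A}_{n,m}$, isolate the $k=m-1$ contribution as dominant, and use that $\widehat{s}_{m-1,j+1}$ does not vanish off $\Delta_{m-1}$ to control the $n$-th root of the remaining factor. The only cosmetic difference is that the paper factors the bracket as $\bigl|\mathcal{A}_{n,m-1}/\mathcal{A}_{n,m}\bigr|\cdot\bigl|\sum_k(\cdots)\mathcal{A}_{n,k}/\mathcal{A}_{n,m-1}\bigr|$ and invokes Corollary~\ref{corallary_logarithmic_asym1} directly to see that the second factor tends to $\pm\widehat{s}_{m-1,j+1}\neq 0$, whereas you compare the individual exponents $\phi_k$ and appeal to the elementary ``dominant term'' principle; these are two phrasings of the same computation. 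One small clean-up: for the subdominant terms $k\in\{j,\ldots,m-2\}$ you only need the $\limsup$ bound \eqref{inequality_nrooth_ratio_Anj} (valid on $\mathbb{C}\setminus(\Delta_m\cup\Delta_{k+1}\cup Z)\supset\mathcal{K}$), not the full limit \eqref{limit_nrooth_ratio_Anj}, which for $k=j$ would additionally require excluding $\Delta_j$.
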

\begin{proof}
Our starting point is (\ref{identity_for_recover}), but now we divide it by $\mathcal{A}_{n,m} = (-1)^ma_{n,m}$. We get
\begin{equation*}
    \left|\frac{\mathcal{A}_{n,j}}{\mathcal{A}_{n,m}} + \sum_{k=j+1}^{m-1}(-1)^{k-j}\widehat{s}_{k,j+1}\frac{\mathcal{A}_{n,k}}{\mathcal{A}_{n,m}}\right| = \left|\frac{a_{n,j}}{a_{n,m}}-\widehat{s}_{m,j+1}\right|,
\end{equation*}
which is equivalent to
\begin{equation*}
\label{first_speed_estimate}
    \left|\frac{\mathcal{A}_{n,m-1}}{\mathcal{A}_{n,m}}\right|\left|\frac{\mathcal{A}_{n,j}}{\mathcal{A}_{n,m-1}} + \sum_{k=j+1}^{m-1}(-1)^{k-j}\widehat{s}_{k,j+1}\frac{\mathcal{A}_{n,k}}{\mathcal{A}_{n,m-1}}\right| = \left|\frac{a_{n,j}}{a_{n,m}}-\widehat{s}_{m,j+1}\right|.
\end{equation*}
Now, $\widehat{s}_{m-1,j+1}(z)\neq 0$, $z\in\C\diff\Delta_{m-1}$; consequently, $\lim_{n\to \infty} |\widehat{s}_{m-1,j+1}(z)|^{1/n}=1$ uniformly on compact subsets of $\C\diff\Delta_{m-1}$. Therefore,
\[
\lim_{n\to \infty} \left|\frac{(-1)^j \mathcal{A}_{n,j}}{\mathcal{A}_{n,m-1}} + \sum_{k=j+1}^{m-1} (-1)^{k} \widehat{s}_{k,j+1} \frac{\mathcal{A}_{n,k}}{\mathcal{A}_{n,m-1}} \right|^{1/n}= 1,
\]
uniformly on compact subsets of   $\mathbb{C}\setminus  \cup_{\ell=j+1}^{m}\Delta_{\ell}.$ On the other hand, from \eqref{limit_nrooth_ratio_Anj}
\[ \lim_{n\to \infty} \left|\frac{\mathcal{A}_{n,m-1}}{\mathcal{A}_{n,m}}\right|^{1/n} =   \exp\left(2V^{\lambda_m}(z)-V^{\lambda_{m-1}}(z)-2\omega_m^{\vec{\lambda}}\right),
\]
uniformly on compact subsets of $\C \setminus (\Delta_{m-1} \cup \Delta_m \cup Z)$. These relations together imply \eqref{geometric_speed}.

To estimate the speed of convergence of the quotients $a_{n,0}/a_{n,m}$ we use equality (\ref{nth_root_An0}). Dividing it by $\mathcal{A}_{n,m}$   we get
\begin{equation*}
    \left|\frac{\mathcal{A}_{n,m-1}}{\mathcal{A}_{n,m}}\right|\left|\frac{\mathcal{A}_{n,0}}{\mathcal{A}_{n,m-1}} + \sum_{k=1}^{m-1} (-1)^k\hat{s}_{k,1}\frac{\mathcal{A}_{n,k}}{\mathcal{A}_{n,m-1}}\right| = \left|\frac{a_{n,0}}{a_{n,m}} + \sum_{k=1}^{m}(-1)^k\frac{a_{n,k}}{a_{n,m}}r_k -\hat{s}_{m,1}\right|.
\end{equation*}
Arguing as above this equality implies that
\begin{equation} \label{geometric_speed-1}
    \lim_{n\to \infty} \left|\frac{a_{n,0}(z)}{a_{n,m}(z)} + \sum_{k=1}^{m}(-1)^k\frac{a_{n,k}(z)}{a_{n,m}(z)}r_k(z) -\hat{s}_{m,1}(z) \right|^{1/n} = \exp\left(2V^{\lambda_m}(z)-V^{\lambda_{m-1}}(z)-2\omega_m^{\vec{\lambda}}\right),
\end{equation}
uniformly on each compact subset $\mathcal{K}\subset  \mathbb{C}\setminus  (\cup_{\ell=1}^{m}\Delta_{\ell}\cup Z)$.

On the other hand, using the formula for $f$, we obtain
\begin{equation*}
    \left|\frac{a_{n,0}}{a_{n,m}} + \sum_{k=1}^{m}(-1)^k\frac{a_{n,k}}{a_{n,m}}r_k -\hat{s}_{m,1}\right| = \left|\left(\frac{a_{n,0}}{a_{n,m}}-f\right) + \sum_{k=1}^{m-1}(-1)^k\left(\frac{a_{n,k}}{a_{n,m}}-\hat{s}_{m,k+1}\right)r_k\right| \geq
    \]
    \[
    \left|\frac{a_{n,0}}{a_{n,m}}-f\right| - \sum_{k=1}^{m-1}\left|\left(\frac{a_{n,k}}{a_{n,m}}-\hat{s}_{m,k+1}\right)r_k\right|;
\end{equation*}
that is,
\begin{multline*} \left|\frac{a_{n,0}(z)}{a_{n,m}(z)}-f(z)\right| \leq \\
 \left|\frac{a_{n,0}(z)}{a_{n,m}(z)} + \sum_{k=1}^{m}(-1)^k\frac{a_{n,k}(z)}{a_{n,m}(z)}r_k (z) -\hat{s}_{m,1}(z)\right| +
\sum_{k=1}^{m-1}\left|\left(\frac{a_{n,k}(z)}{a_{n,m}(z)}-\hat{s}_{m,k+1}(z)\right)r_k(z)\right|
\end{multline*}
This inequality, together with  \eqref{geometric_speed} and \eqref{geometric_speed-1}, implies \eqref{geometric_speed_an0}.
\end{proof}


\begin{thebibliography}{99}

\bibitem{3} R. Beals, D.H. Sattinger, and J. Szmigielski. Multi-peakons and the classical moment problem. Adv. in Math. {\bf 154} (2000), 229-257.


\bibitem{bello_lago} M. Bello Hern\'andez, G. L\'opez Lagomasino, and J. Mínguez Ceniceros. Fourier-Pad\'e approximants of Angelesco systems. Constr. Approx. \textbf{26} (2007), 339-359.






\bibitem{FL} U. Fidalgo Prieto, G. L\'opez Lagomasino. Rate of convergence of generalized Hermite-Padé approximants of Nikishin systems. Constr. Approx., 23 (2006), 165-196.

\bibitem{Lago_Sergio_Ulises} U. Fidalgo Prieto, G. L\'opez Lagomasino, and S. Medina Peralta. Asymptotic of Cauchy biorthogonal polynomials. Mediterr. J. Math. (2020) 17: 22.

\bibitem{lago_ulises_sorokin} U. Fidalgo Prieto, G. L\'opez Lagomasino, and V. N. Sorokin. Mixed type multiple orthogonal polynomials for two Nikishin systems.  Constr. Approx. \textbf{32} (2010), 255-306.



\bibitem{gonrakh} A. A. Gonchar and E. A. Rakhmanov. On convergence of simultaneous Pad\'e approximants for systems of functions of Markov type.
 Proc. Steklov Inst. Math. \textbf{157} (1983), 31-50.


\bibitem{GRS} A.A. Gonchar, E.A. Rakhmanov, and V.N. Sorokin.   Hermite--Pad\'e approximants for systems of Markov--type functions.  Sb. Math. {\bf 188} (1997), 33-58.


\bibitem{GLM} L. G. Gonz\'alez Ricardo, G. L\'opez Lagomasino, and S. Medina Peralta.
 On the convergence of multi-level Hermite-Pad\'e approximants. arxiv  2001.08276






\bibitem{Lago_Sergio1} G. L\'opez Lagomasino and S. Medina Peralta. On the convergence of type \textsc{i} Hermite-Pad\'e approximants for a class of meromorphic functions.  J. Comput. Appl. Math. \textbf{284} (2015), 216-227.

\bibitem{Lago_Sergio_Jacek} G. L\'opez Lagomasino, S. Medina Peralta, and J. Szmigielski. Mixed type Hermite-Pad\'e approximation inspired by the Degasperis-Procesi equation.  Adv. Math. \textbf{349} (2019), 813–838.

\bibitem{LS} H. Lundmark, J. Szmigielski. Degasperis-Procesi peakons and the discrete cubic string. Int. Math.
Res. Pap. 2 (2005).


\bibitem{nikishin} E. M. Nikishin. On simultaneous Pad\'e approximants.  Math. USSR Sb. \textbf{41} (1982), 409-425.



\bibitem{safftotik} E. B. Saff and V. Totik.  Logarithmic Potentials with External Fields. Comprehensive
Studies in Mathematics 316. Springer, New York, USA, 1997.

\bibitem{Stahl_Totik} H. Stahl and V. Totik.  General Orthogonal Polynomials. Encyclopedia of Mathematics and its applications 43. Cambridge University Press, Cambridge, UK, 1992.


\end{thebibliography}
\end{document}